\newtheorem{thm}{Theorem}[section]
\newtheorem{lem}[thm]{Lemma}
\newtheorem{defn}[thm]{Definition}
\numberwithin{equation}{section}
\begin{document}

\title{\bf Canonical connections and algebraic Wanas solitons of three-dimensional Lorentzian Lie groups}
\author{Yong Wang$^*$, Yue Tang}

\thanks{{\scriptsize
\hskip -0.4 true cm \textit{2010 Mathematics Subject Classification:}
53C40; 53C42.
\newline \textit{Key words and phrases:}Canonical connections; algebraic Wanas solitons; three-dimensional Lorentzian Lie groups }}

\maketitle

\begin{abstract}
 In this paper, we compute the Wanas tensor associated to canonical connections on three-dimensional Lorentzian Lie groups with
 some product structure. We define algebraic Wanas solitons associated to canonical connections. We classify algebraic Wanas solitons associated to canonical connections on three-dimensional Lorentzian Lie groups with
 some product structure.
\end{abstract}

\vskip 0.2 true cm


\pagestyle{myheadings}
\markboth{\rightline {\scriptsize Wang}}
         {\leftline{\scriptsize Canonical connections and algebraic Wanas solitons}}

\bigskip
\bigskip


\section{ Introduction}
\indent The concept of the algebraic Ricci soliton was first introduced by Lauret in Riemannian case in \cite{La}. In \cite{La}, Lauret studied the relation
between solvsolitons and Ricci solitons on Riemannian manifolds. More precisely, he proved that any Riemannian solvsoliton metric is a Ricci soliton.
The concept of the algebraic Ricci soliton was extended to the pseudo-Riemannian case in \cite{BO}. In \cite{BO}, Batat and Onda studied
algebraic Ricci solitons of three-dimensional Lorentzian Lie groups. They got a complete classification of algebraic Ricci solitons of three-dimensional Lorentzian Lie groups and they proved that, contrary to the Riemannian case, Lorentzian Ricci solitons needed not be algebraic Ricci solitons.
In \cite{ES}, Etayo and Santamaria studied some affine connections on manifolds with the product structure
or the complex structure. In particular, the canonical connection and the Kobayashi-Nomizu connection for a product structure was studied. In \cite{Wa},
we introduced a particular product structure on three-dimensional Lorentzian Lie groups and we computed canonical connections and Kobayashi-Nomizu connections and their curvature on three-dimensional Lorentzian Lie groups with
  this product structure. We defined algebraic Ricci solitons associated to canonical connections and Kobayashi-Nomizu connections. We classified algebraic Ricci solitons associated to canonical connections and Kobayashi-Nomizu connections on three-dimensional Lorentzian Lie groups with
 this product structure. In \cite{YE}, a Wanas tensor associated to a affine connection on a parallelizable manifold was introduced. In fact,
  a Wanas tensor associated to a affine connection on any manifolds can be defined. In this paper, we compute the Wanas tensor associated to canonical connections on three-dimensional Lorentzian Lie groups with
  the product structure in \cite{Wa}. We define algebraic Wanas solitons associated to canonical connections. We classify algebraic Wanas solitons associated to canonical connections on three-dimensional Lorentzian Lie groups with the product structure in \cite{Wa}.\\
\indent In Section 2, We classify algebraic Wanas solitons associated to canonical connections on three-dimensional unimodular Lorentzian Lie groups with
 the product structure.
In Section 3, we classify algebraic Wanas solitons associated to canonical connections on three-dimensional non-unimodular Lorentzian Lie groups with
 the product structure.


\vskip 1 true cm

\section{ Algebraic Wanas solitons associated to canonical connections on three-dimensional unimodular Lorentzian Lie groups}

Three-dimensional Lorentzian Lie groups had been classified in \cite{Ca1,CP}(see Theorem 2.1 and Theorem 2.2 in \cite{BO}). Throughout this paper,
we shall by $\{G_i\}_{i=1,\cdots,7}$, denote the connected, simply connected three-dimensional Lie group equipped with a left-invariant Lorentzian metric $g$ and
having Lie algebra $\{\mathfrak{g}\}_{i=1,\cdots,7}$. Let $\nabla$ be the Levi-Civita connection of $G_i$ and $R$ its curvature tensor, taken with the convention
\begin{equation}
R(X,Y)Z=\nabla_X\nabla_YZ-\nabla_Y\nabla_XZ-\nabla_{[X,Y]}Z.
\end{equation}
We define a product structure $J$ on $G_i$ as in \cite{Wa} by
\begin{equation}Je_1=e_1,~Je_2=e_2,~Je_3=-e_3,
\end{equation}
then $J^2={\rm id}$ and $g(Je_j,Je_j)=g(e_j,e_j)$. By \cite{ES}, we define the canonical connection as follows:
\begin{equation}\nabla^0_XY=\nabla_XY-\frac{1}{2}(\nabla_XJ)JY,
\end{equation}
We define its curvature tensor by
\begin{equation}
R^0(X,Y)Z=\nabla^0_X\nabla^0_YZ-\nabla^0_Y\nabla^0_XZ-\nabla^0_{[X,Y]}Z.
\end{equation}
The Ricci tensors of $(G_i,g)$ associated to the canonical connection
is defined by
\begin{equation}\rho^0(X,Y)=-g(R^0(X,e_1)Y,e_1)-g(R^0(X,e_2)Y,e_2)+g(R^0(X,e_3)Y,e_3),
\end{equation}
where $\{e_1,e_2,e_3\}$ is a pseudo-orthonormal basis, with $e_3$ timelike.
 The Ricci operators ${\rm Ric}^0$ is given by
\begin{equation}\rho^0(X,Y)=g({\rm Ric}^0(X),Y).
\end{equation}
Let
\begin{equation}\widetilde{\rho}^0(X,Y)=\frac{{\rho}^0(X,Y)+{\rho}^0(Y,X)}{2}
,~~\widetilde{\rho}^0(X,Y)=g(\widetilde{{\rm Ric}}^0(X),Y).
\end{equation}
Let the torsion tensor $T^0$ associated to $\nabla^0$ be
\begin{equation}T^0(X,Y)=\nabla^0_XY-\nabla^0_YX-[X,Y].
\end{equation}
We define the Wanas tensor $W^0$ associated to $\nabla^0$ by (see Theorem 4.1 in \cite{YE}):
\begin{equation}W^0(X,Y)(Z):=R^0(X,Y)Z-A^0(X,Y)Z,~~A^0(X,Y)Z:=T^0(T^0(X,Y),Z).
\end{equation}
We define
\begin{equation}w^0(X,Y)=-g(W^0(X,e_1)Y,e_1)-g(W^0(X,e_2)Y,e_2)+g(W^0(X,e_3)Y,e_3),
\end{equation}
and
\begin{equation}a^0(X,Y)=-g(A^0(X,e_1)Y,e_1)-g(A^0(X,e_2)Y,e_2)+g(A^0(X,e_3)Y,e_3).
\end{equation}
Then $w^0(X,Y)=\rho^0(X,Y)-a^0(X,Y)$. We define
\begin{equation}a^0(X,Y)=g(\overline{A}^0(X),Y),~~w^0(X,Y)=g({\rm Wan}^0(X),Y),
\end{equation}
\begin{equation}
\widetilde{w}^0(X,Y)=\frac{w^0(X,Y)+w^0(Y,X)}{2},
~~\widetilde{w}^0(X,Y)=g(\widetilde{{\rm Wan}}^0(X),Y).
\end{equation}
Then
\begin{equation}{\rm Wan}^0(X)={\rm Ric}^0(X)-\overline{A}^0(X).
\end{equation}
\vskip 0.5 true cm
\begin{defn}
$(G_i,g,J)$ is called the first (resp. second) kind algebraic Wanas soliton associated to the connection $\nabla^0$ if it satisfies
\begin{equation}
{\rm Wan}^0=c{\rm Id}+D~~({\rm resp.}~\widetilde{{\rm Wan}}^0=c{\rm Id}+D),
\end{equation}
where $c$ is a real number, and $D$ is a derivation of $\mathfrak{g}$, that is
\begin{equation}D[X,Y]=[DX,Y]+[X,DY]~~{\rm for }~ X,Y\in \mathfrak{g}.
\end{equation}
\end{defn}
\vskip 0.5 true cm
\noindent{\bf 2.1 Algebraic Wanas solitons of $G_1$}\\
\vskip 0.5 true cm
By (2.1) in \cite{BO}, we have for $G_1$, there exists a pseudo-orthonormal basis $\{e_1,e_2,e_3\}$ with $e_3$ timelike such that the Lie
algebra of $G_1$ satisfies
\begin{equation}
[e_1,e_2]=\alpha e_1-\beta e_3,~~[e_1,e_3]=-\alpha e_1-\beta e_2,~~[e_2,e_3]=\beta e_1+\alpha e_2+\alpha e_3,~~\alpha\neq 0.
\end{equation}
By Lemma 2.4 in \cite{Wa}, we have
\begin{lem}
The canonical connection $\nabla^0$ of $(G_1,J)$ is given by
\begin{align}
&\nabla^0_{e_1}e_1=-\alpha e_2,~~\nabla^0_{e_1}e_2=\alpha e_1,~~\nabla^0_{e_1}e_3=0,\\\notag
&\nabla^0_{e_2}e_1=0,~~\nabla^0_{e_2}e_2=0,~~\nabla^0_{e_2}e_3=0,\\\notag
&\nabla^0_{e_3}e_1=\frac{\beta}{2}e_2,~~\nabla^0_{e_3}e_2=-\frac{\beta}{2}e_1,~~\nabla^0_{e_3}e_3=0.
\notag
\end{align}
\end{lem}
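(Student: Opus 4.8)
The plan is to read the formula off directly from the definition (2.3), $\nabla^0_XY=\nabla_XY-\frac12(\nabla_XJ)JY$, so that the only ingredient really needed is the Levi-Civita connection $\nabla$ of $(G_1,g)$ in the basis $\{e_1,e_2,e_3\}$. That connection is obtained from the Koszul formula $2g(\nabla_{e_i}e_j,e_k)=g([e_i,e_j],e_k)-g([e_j,e_k],e_i)+g([e_k,e_i],e_j)$, the metric terms dropping out because $g$ is left-invariant and $\{e_i\}$ is pseudo-orthonormal with $e_3$ timelike; applied to the bracket relations (2.17) this is exactly the computation of \cite{BO}, already recorded in \cite{Wa}. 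So the first step is to import the nine expressions $\nabla_{e_i}e_j$ for $G_1$; for instance one gets $\nabla_{e_1}e_1=-\alpha e_2-\alpha e_3$, $\nabla_{e_1}e_2=\alpha e_1-\frac{\beta}{2}e_3$, $\nabla_{e_3}e_1=\frac{\beta}{2}e_2$, and so on.

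The second step exploits the special shape of $J$: its $(+1)$-eigenspace is ${\rm span}\{e_1,e_2\}$ and its $(-1)$-eigenspace is ${\rm span}\{e_3\}$. Hence for $a\in\{1,2\}$ one has $(\nabla_XJ)e_a=\nabla_X(Je_a)-J(\nabla_Xe_a)=({\rm id}-J)\nabla_Xe_a$, which is twice the $e_3$-component of $\nabla_Xe_a$, while $(\nabla_XJ)e_3=-({\rm id}+J)\nabla_Xe_3$, which is $-2$ times the ${\rm span}\{e_1,e_2\}$-component of $\nabla_Xe_3$. Substituting this into (2.3), together with $Je_1=e_1$, $Je_2=e_2$, $Je_3=-e_3$, shows that $\nabla^0_Xe_1$ and $\nabla^0_Xe_2$ are $\nabla_Xe_1$ and $\nabla_Xe_2$ with their $e_3$-components deleted, and that $\nabla^0_Xe_3$ is $\nabla_Xe_3$ with its $e_1,e_2$-components deleted. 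In other words, $\nabla^0$ is the block-diagonal part of $\nabla$ with respect to the $J$-eigenspace splitting $TG_1={\rm span}\{e_1,e_2\}\oplus{\rm span}\{e_3\}$, which is precisely Lemma 2.4 of \cite{Wa} specialized to $G_1$.

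The third step is then bookkeeping: run this deletion rule over the nine Levi-Civita expressions and read off the nine entries in the statement. The only subtle points are the Lorentzian signs, both in deriving $\nabla$ from the Koszul formula (the contractions involving $g(e_3,e_3)=-1$) and in the eigenvalue $-1$ of $J$ on $e_3$; these are the sole source of potential sign errors and are exactly why the resulting list is not perfectly symmetric in the indices. I do not expect any genuine obstacle here: the claim is a finite computation whose correctness is transparent once the block-diagonal description is in hand, so in the write-up I would cite \cite{BO} for $\nabla$, record the one-line eigenspace remark above, and simply list the $\nabla^0_{e_i}e_j$.
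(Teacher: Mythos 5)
Your proposal is correct and follows essentially the route the paper relies on: the paper gives no argument beyond citing Lemma 2.4 of \cite{Wa}, which is exactly the direct computation you describe — take the Levi-Civita connection of $G_1$ (your quoted values $\nabla_{e_1}e_1=-\alpha e_2-\alpha e_3$, $\nabla_{e_1}e_2=\alpha e_1-\tfrac{\beta}{2}e_3$, $\nabla_{e_3}e_1=\tfrac{\beta}{2}e_2$, etc.\ are right) and substitute into (2.3). Your observation that $\nabla^0_XY=\tfrac12(\nabla_XY+J\nabla_X(JY))$ is the block-diagonal part of $\nabla$ with respect to the $J$-eigenspace splitting ${\rm span}\{e_1,e_2\}\oplus{\rm span}\{e_3\}$ is a clean way to organize the nine entries and reproduces the stated table, including $\nabla^0_{e_3}e_2=-\tfrac{\beta}{2}e_1$ and $\nabla^0_{e_3}e_3=0$.
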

By (2.8) and Lemma 2.2, we have
\begin{align}
&T^0(e_1,e_2)=\beta e_3,~~T^0(e_1,e_3)=\alpha e_1+\frac{\beta}{2} e_2,~~T^0(e_2,e_3)=-\frac{\beta}{2} e_1-\alpha e_2-\alpha e_3.
\end{align}
By (2.9) and (2.19), we have
\vskip 0.5 true cm
\begin{lem}
The tensor $A^0$ of the canonical connection $\nabla^0$ of $(G_1,J)$ is given by
\begin{align}
&A^0(e_1,e_2)e_1=-\alpha\beta e_1-\frac{\beta^2}{2} e_2,~~A^0(e_1,e_2)e_2=\frac{\beta^2}{2}e_1+\alpha\beta e_2+\alpha\beta e_3,~~A^0(e_1,e_2)e_3=0,\\\notag
&A^0(e_1,e_3)e_1=-\frac{\beta^2}{2}e_3,~~A^0(e_1,e_3)e_2=\alpha\beta e_3,~~A^0(e_1,e_3)e_3=(\alpha^2-\frac{\beta^2}{4})e_1-\frac{\alpha\beta}{2}e_3,\\\notag
&A^0(e_2,e_3)e_1=\alpha^2 e_1+\frac{\alpha\beta}{2}e_2+\alpha\beta e_3,
~~A^0(e_2,e_3)e_2=-\frac{\alpha\beta}{2}e_1-\alpha^2 e_2-(\alpha^2+\frac{\beta^2}{2})e_3,\\\notag
&
A^0(e_2,e_3)e_3=
(\alpha^2-\frac{\beta^2}{4})e_2+\alpha^2e_3.\notag
\end{align}
\end{lem}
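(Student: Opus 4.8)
The plan is to compute $A^0(e_i,e_j)e_k$ directly from its definition in (2.9), namely $A^0(X,Y)Z = T^0(T^0(X,Y),Z)$, using the values of $T^0$ recorded in (2.19) together with the elementary properties of the torsion: $T^0$ is $\mathbb{R}$-bilinear, antisymmetric (so $T^0(e_j,e_i) = -T^0(e_i,e_j)$), and vanishes on the diagonal, $T^0(e_k,e_k) = 0$. Since $A^0$ is antisymmetric in its first two arguments, it suffices to treat the three pairs $(e_1,e_2)$, $(e_1,e_3)$, $(e_2,e_3)$ and, for each, the three values $Z = e_1, e_2, e_3$. The whole argument is thus: evaluate the inner torsion $T^0(e_i,e_j)$ to get a vector, substitute it into the first slot of $T^0(\cdot,e_k)$, expand by bilinearity, and re-use (2.19) and antisymmetry on each resulting term.

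Concretely, for the first pair $T^0(e_1,e_2) = \beta e_3$ by (2.19), so $A^0(e_1,e_2)e_k = \beta\, T^0(e_3,e_k) = -\beta\, T^0(e_k,e_3)$ for $k = 1,2$ and $A^0(e_1,e_2)e_3 = \beta\, T^0(e_3,e_3) = 0$; inserting $T^0(e_1,e_3)$ and $T^0(e_2,e_3)$ from (2.19) gives the first line. For the second pair $T^0(e_1,e_3) = \alpha e_1 + \tfrac{\beta}{2}e_2$, hence $A^0(e_1,e_3)e_k = \alpha\, T^0(e_1,e_k) + \tfrac{\beta}{2}\, T^0(e_2,e_k)$; the case $k=1$ uses $T^0(e_1,e_1)=0$ and $T^0(e_2,e_1) = -\beta e_3$, the case $k=2$ uses $T^0(e_1,e_2) = \beta e_3$ and $T^0(e_2,e_2)=0$, and the case $k=3$ uses $T^0(e_1,e_3)$ and $T^0(e_2,e_3)$ directly, which yields the second line. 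For the third pair $T^0(e_2,e_3) = -\tfrac{\beta}{2}e_1 - \alpha e_2 - \alpha e_3$, so $A^0(e_2,e_3)e_k = -\tfrac{\beta}{2}\, T^0(e_1,e_k) - \alpha\, T^0(e_2,e_k) - \alpha\, T^0(e_3,e_k)$, and expanding each of the three choices of $k$ by bilinearity (using $T^0(e_2,e_1)=-\beta e_3$, $T^0(e_3,e_1)=-\alpha e_1-\tfrac{\beta}{2}e_2$, $T^0(e_3,e_2)=\tfrac{\beta}{2}e_1+\alpha e_2+\alpha e_3$, and the vanishing diagonal entries) and collecting terms produces the last two lines.

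The computation is entirely mechanical, so there is no genuine obstacle; the only point demanding care is the sign bookkeeping coming from the antisymmetry $T^0(e_j,e_i)=-T^0(e_i,e_j)$ and the systematic cancellation of the diagonal terms $T^0(e_k,e_k)=0$ when $k$ coincides with the index already appearing in the inner bracket. As an optional consistency check one may verify that contracting $A^0$ according to (2.11) gives $a^0 = \rho^0 - w^0$ with the Ricci-type quantities computed from Lemma 2.2, but for the stated lemma the direct substitution above is all that is needed.
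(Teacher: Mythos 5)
Your proposal is correct and is exactly the paper's route: the paper obtains Lemma 2.3 by the same direct substitution of the torsion values (2.19) into the definition $A^0(X,Y)Z=T^0(T^0(X,Y),Z)$ from (2.9), expanding by bilinearity and antisymmetry of $T^0$. Your computed values all agree with the stated lemma, so nothing is missing.
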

\vskip 0.5 true cm
By Lemma 2.3, (2.11) and (2.12), we have
\begin{align}
{\overline{A}}^0\left(\begin{array}{c}
e_1\\
e_2\\
e_3
\end{array}\right)=\left(\begin{array}{ccc}
\beta^2&-2\alpha\beta&-\frac{\alpha\beta}{2}\\
-2\alpha\beta&\alpha^2+\beta^2&\alpha^2\\
\frac{\alpha\beta}{2}&-\alpha^2&-2\alpha^2+\frac{\beta^2}{2}
\end{array}\right)\left(\begin{array}{c}
e_1\\
e_2\\
e_3
\end{array}\right).
\end{align}
By (2.22) in \cite{Wa}, we have
\begin{align}
{\rm Ric}^0\left(\begin{array}{c}
e_1\\
e_2\\
e_3
\end{array}\right)=\left(\begin{array}{ccc}
-\left(\alpha^2+\frac{\beta^2}{2}\right)&0&0\\
0&-\left(\alpha^2+\frac{\beta^2}{2}\right)&0\\
\frac{\alpha\beta}{2}&\alpha^2&0
\end{array}\right)\left(\begin{array}{c}
e_1\\
e_2\\
e_3
\end{array}\right).
\end{align}
By (2.14), we have
\begin{align}
{{\rm Wan}}^0\left(\begin{array}{c}
e_1\\
e_2\\
e_3
\end{array}\right)=\left(\begin{array}{ccc}
-(\alpha^2+\frac{3}{2}\beta^2)
&2\alpha\beta&\frac{\alpha\beta}{2}\\
2\alpha\beta&-(2\alpha^2+\frac{3}{2}\beta^2)
&-\alpha^2\\
0&2\alpha^2&2\alpha^2-\frac{\beta^2}{2}
\end{array}\right)\left(\begin{array}{c}
e_1\\
e_2\\
e_3
\end{array}\right).
\end{align}
\noindent If $(G_1,g,J)$ is the first kind algebraic Wanas soliton associated to the connection $\nabla^0$, then
${\rm Wan}^0=c{\rm Id}+D$, so
\begin{align}
\left\{\begin{array}{l}
De_1=-(\alpha^2+\frac{3}{2}\beta^2+c)e_1
+2\alpha\beta e_2+\frac{\alpha\beta}{2}e_3,\\
De_2=2\alpha\beta e_1-(2\alpha^2+\frac{3}{2}\beta^2+c)e_2
-\alpha^2e_3\\
De_3=2\alpha^2 e_2+(2\alpha^2-\frac{\beta^2}{2}-c) e_3.\\
\end{array}\right.
\end{align}
By (2.16) and (2.24), we can get $\alpha=0$ which is a contradiction with (2.17). So we have
\vskip 0.5 true cm
\begin{thm}
$(G_1,g,J)$ is not the first kind algebraic Wanas soliton associated to the connection $\nabla^0$.
\end{thm}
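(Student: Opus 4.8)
The plan is to derive a contradiction directly from the system (2.24) by exploiting the requirement that $D$ must be a derivation of $\mathfrak{g}_1$, i.e. satisfy (2.16) on all pairs of basis vectors. First I would write down the three derivation conditions $D[e_1,e_2]=[De_1,e_2]+[e_1,De_2]$, $D[e_1,e_3]=[De_1,e_3]+[e_1,De_3]$ and $D[e_2,e_3]=[De_2,e_3]+[e_2,De_3]$, substituting the bracket relations (2.17) on the left and the explicit expressions for $De_1,De_2,De_3$ from (2.24) on the right, then expanding everything in the basis $\{e_1,e_2,e_3\}$ using (2.17) again to reduce the brackets $[De_i,e_j]$.

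The key steps, in order: (1) compute $D(\alpha e_1-\beta e_3)=\alpha De_1-\beta De_3$ and match it against $[De_1,e_2]+[e_1,De_2]$; this yields several scalar equations in $\alpha,\beta,c$. (2) Do the same for the $[e_1,e_3]$ relation and the $[e_2,e_3]$ relation. (3) Collect the resulting scalar equations and solve; the claim in the excerpt is that this forces $\alpha=0$. Since $\alpha\neq 0$ by (2.17), this is the desired contradiction. A convenient shortcut: because $D=\mathrm{Wan}^0-c\,\mathrm{Id}$ and $c\,\mathrm{Id}$ is automatically a derivation only when it kills all brackets (which it does not here unless the structure constants vanish), one really only needs the coefficient of $e_i$ in a single well-chosen component of one derivation identity to produce a relation of the form (polynomial in $\alpha,\beta$)$\cdot\alpha = 0$ together with enough auxiliary relations to exclude the $\beta$-only escape; in practice comparing the $e_3$-components of the first derivation identity and the $e_1$-components of the second is the most efficient route.

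I expect the main obstacle to be purely bookkeeping: the matrix $\mathrm{Wan}^0$ in (2.23) is not symmetric and not triangular, so all three derivation identities genuinely contribute, and one must be careful to track every one of the roughly nine scalar equations (three per identity) without dropping cross terms, especially the terms coming from $[De_i,e_3]$ which mix $e_1,e_2,e_3$. The only conceptual point is to make sure that after imposing all nine equations the system has no solution with $\alpha\neq 0$ for any value of $c$; one should verify that even allowing $\beta$ and $c$ to be free does not rescue a nonzero $\alpha$. Once $\alpha=0$ is forced, the contradiction with $\alpha\neq0$ in (2.17) is immediate and the theorem follows.
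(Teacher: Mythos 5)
Your plan is exactly the paper's argument: the paper's entire proof is the one-line assertion that imposing the derivation condition (2.16) on the maps (2.24), using the brackets (2.17), forces $\alpha=0$, contradicting $\alpha\neq 0$, and your nine-equation computation is precisely that check (for instance the $e_2$-component of $D[e_1,e_2]=[De_1,e_2]+[e_1,De_2]$ already gives $\alpha^2\beta=0$, and the remaining components then yield the contradiction). The only caveat is that your proposed two-component shortcut (the $e_3$-component of the first identity plus the $e_1$-component of the second) is too optimistic by itself, since it leaves the branch $\beta^2=2\alpha^2$ open, but your stated fallback of verifying the full system with $\beta$ and $c$ free closes this, so the proof is correct and essentially identical to the paper's.
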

\vskip 0.5 true cm
By (2.12), (2.13) and (2.23), we get
\begin{align}
{\widetilde{{\rm Wan}}}^0\left(\begin{array}{c}
e_1\\
e_2\\
e_3
\end{array}\right)=\left(\begin{array}{ccc}
-(\alpha^2+\frac{3}{2}\beta^2)
&2\alpha\beta&\frac{\alpha\beta}{4}\\
2\alpha\beta&-(2\alpha^2+\frac{3}{2}\beta^2)
&-\frac{3}{2}\alpha^2\\
-\frac{\alpha\beta}{4}&\frac{3}{2}\alpha^2&2\alpha^2-\frac{\beta^2}{2}
\end{array}\right)\left(\begin{array}{c}
e_1\\
e_2\\
e_3
\end{array}\right).
\end{align}
\noindent If $(G_1,g,J)$ is the second kind algebraic Wanas soliton associated to the connection $\nabla^0$, then
$\widetilde{{\rm Wan}}^0=c{\rm Id}+D$, so
\begin{align}
\left\{\begin{array}{l}
De_1=-(\alpha^2+\frac{3}{2}\beta^2+c)e_1
+2\alpha\beta e_2+\frac{\alpha\beta}{4}e_3,\\
De_2=2\alpha\beta e_1-(2\alpha^2+\frac{3}{2}\beta^2+c)e_2
-\frac{3}{2}\alpha^2e_3\\
De_3=-\frac{\alpha\beta}{4}e_1+\frac{3}{2}\alpha^2 e_2+(2\alpha^2-\frac{\beta^2}{2}-c) e_3.\\
\end{array}\right.
\end{align}
By (2.16) and (2.26), similar to Theorem 2.4, Then
\vskip 0.5 true cm
\begin{thm}
$(G_1,g,J)$ is not the second kind algebraic Wanas soliton associated to the connection $\nabla^0$.
\end{thm}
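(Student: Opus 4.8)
The plan is to mimic the proof of Theorem 2.4 almost verbatim, now using the symmetrized Wanas operator $\widetilde{{\rm Wan}}^0$ from (2.26) in place of ${\rm Wan}^0$. First I would assume, for contradiction, that $(G_1,g,J)$ is a second kind algebraic Wanas soliton, so that $\widetilde{{\rm Wan}}^0 = c\,{\rm Id} + D$ for some $c\in\mathbb{R}$ and some derivation $D$ of $\mathfrak{g}$; this gives the explicit formulas for $De_1, De_2, De_3$ recorded in (2.27). The next step is to impose the derivation condition (2.16), $D[X,Y]=[DX,Y]+[X,DY]$, on the three bracket relations of (2.17), namely $[e_1,e_2]=\alpha e_1-\beta e_3$, $[e_1,e_3]=-\alpha e_1-\beta e_2$, and $[e_2,e_3]=\beta e_1+\alpha e_2+\alpha e_3$.

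Carrying this out, I would expand $D[e_1,e_2]=[De_1,e_2]+[e_1,De_2]$ and compare coefficients of $e_1,e_2,e_3$, then do the same for the other two brackets, producing a system of linear equations in the entries of $D$, in $c$, and in $\alpha,\beta$. As in Theorem 2.4, the expectation is that combining a suitable subset of these equations forces $\alpha=0$, contradicting the standing hypothesis $\alpha\neq 0$ in (2.17). Concretely, one anticipates that the $e_3$-components coming from $D[e_1,e_2]$ and $D[e_1,e_3]$, together with the trace-type constraint on the diagonal of $\widetilde{{\rm Wan}}^0$, collapse to an identity that cannot hold unless $\alpha=0$. Because the off-diagonal corrections in passing from (2.23) to (2.26) only rescale the $e_3$-row/column entries (the $(1,1),(1,2),(2,1),(2,2)$ block is unchanged), the very same elimination that produced $\alpha=0$ in Theorem 2.4 should go through with only cosmetic changes to the coefficients.

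The main obstacle, such as it is, is purely bookkeeping: one must be careful that the modified entries $\tfrac{\alpha\beta}{4}$, $-\tfrac{3}{2}\alpha^2$, $-\tfrac{\alpha\beta}{4}$, $\tfrac{3}{2}\alpha^2$ in (2.26) do not conspire to make the contradiction disappear — i.e., that no special relation between $\alpha$ and $\beta$ (for instance $\beta=0$, or $4\alpha^2=\beta^2$) yields a genuine derivation. I would therefore check the degenerate cases $\beta=0$ and the vanishing of any coefficient explicitly, confirming that even then the remaining equations force $\alpha=0$. Once the linear system is seen to be inconsistent with $\alpha\neq 0$, the contradiction is complete and we conclude that $(G_1,g,J)$ is not the second kind algebraic Wanas soliton associated to $\nabla^0$, which is precisely the assertion of the theorem.
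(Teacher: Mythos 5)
Your proposal follows exactly the paper's own route: the paper's proof of this theorem is just ``by (2.16) and (2.27), similar to Theorem 2.4,'' i.e.\ impose the derivation condition on the brackets (2.17) using the explicit $De_i$ coming from $\widetilde{{\rm Wan}}^0=c\,{\rm Id}+D$ and derive $\alpha=0$, contradicting $\alpha\neq 0$. The elimination you anticipate does indeed close: for instance the $e_2$-component of $D[e_1,e_2]=[De_1,e_2]+[e_1,De_2]$ gives $\tfrac{3}{4}\alpha^2\beta=0$, hence $\beta=0$, and then the identity for $[e_1,e_3]$ forces $\alpha=0$, so your approach is correct and essentially identical to the paper's.
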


\noindent{\bf 2.2 Algebraic Wanas solitons of $G_2$}\\

By (2.2) in \cite{BO}, we have for $G_2$, there exists a pseudo-orthonormal basis $\{e_1,e_2,e_3\}$ with $e_3$ timelike such that the Lie
algebra of $G_2$ satisfies
\begin{equation}
[e_1,e_2]=\gamma e_2-\beta e_3,~~[e_1,e_3]=-\beta e_2-\gamma e_3,~~[e_2,e_3]=\alpha e_1,~~\gamma\neq 0.
\end{equation}
\noindent By Lemma 2.14 in \cite{Wa}, we have
\vskip 0.5 true cm
\begin{lem}
The canonical connection $\nabla^0$ of $(G_2,J)$ is given by
\begin{align}
&\nabla^0_{e_1}e_1=0,~~\nabla^0_{e_1}e_2=0,~~\nabla^0_{e_1}e_3=0,\\\notag
&\nabla^0_{e_2}e_1=-\gamma e_2,~~\nabla^0_{e_2}e_2=\gamma e_1,~~\nabla^0_{e_2}e_3=0,\\\notag
&\nabla^0_{e_3}e_1=\frac{\alpha}{2}e_2,~~\nabla^0_{e_3}e_2=-\frac{\alpha}{2}e_1,~~\nabla^0_{e_3}e_3=0.
\notag
\end{align}
\end{lem}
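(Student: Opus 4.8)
The plan is to read $\nabla^0$ off directly from its definition (2.3), $\nabla^0_XY=\nabla_XY-\tfrac12(\nabla_XJ)JY$, so the first step is to record the Levi-Civita connection $\nabla$ of $(G_2,g)$. Since $g$ is left-invariant and $e_1,e_2,e_3$ are left-invariant, each $g(e_i,e_j)$ is constant and Koszul's formula collapses to $2g(\nabla_{e_i}e_j,e_k)=g([e_i,e_j],e_k)-g([e_j,e_k],e_i)+g([e_k,e_i],e_j)$; substituting the brackets (2.28) together with $g(e_1,e_1)=g(e_2,e_2)=1$, $g(e_3,e_3)=-1$ produces $\nabla_{e_i}e_j$ for all nine pairs (this is the Levi-Civita connection of $G_2$, cf. \cite{BO}).

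It is then convenient to observe that (2.3) merely extracts the $J$-block-diagonal part of $\nabla$. Writing $P_\pm=\tfrac12({\rm id}\pm J)$ for the projections onto the eigenspaces $V_+={\rm span}\{e_1,e_2\}$ and $V_-={\rm span}\{e_3\}$ of $J$, one checks from $\nabla_XJ=2\nabla_XP_+$ and $P_+^2=P_+$ that $\nabla^0_XY=P_+\nabla_X(P_+Y)+P_-\nabla_X(P_-Y)$; equivalently, one may simply compute $(\nabla_{e_i}J)(e_j)=\nabla_{e_i}(Je_j)-J(\nabla_{e_i}e_j)$ using $Je_1=e_1,\,Je_2=e_2,\,Je_3=-e_3$ and substitute. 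Either way, $\nabla^0_{e_i}e_1$ and $\nabla^0_{e_i}e_2$ are the $V_+$-parts of $\nabla_{e_i}e_1$ and $\nabla_{e_i}e_2$, while $\nabla^0_{e_i}e_3$ is the $V_-$-part of $\nabla_{e_i}e_3$. Inserting the Levi-Civita values from the first step leaves precisely the entries in the statement: $\nabla^0_{e_2}e_1=-\gamma e_2$, $\nabla^0_{e_2}e_2=\gamma e_1$, $\nabla^0_{e_3}e_1=\tfrac{\alpha}{2}e_2$, $\nabla^0_{e_3}e_2=-\tfrac{\alpha}{2}e_1$, with all other $\nabla^0_{e_i}e_j$ vanishing.

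No conceptual difficulty arises; the only real hazard is sign bookkeeping — the timelike factor $g(e_3,e_3)=-1$ both in Koszul's formula and when lowering indices, together with the flip $Je_3=-e_3$, which enters through $\nabla_X(JY)$ inside $(\nabla_XJ)$ and again through the factor $JY$ in (2.3). As a consistency check, the resulting table should satisfy $\nabla^0J=0$ and $\nabla^0g=0$: each $\nabla^0_{e_i}$ acts as an infinitesimal rotation of the spacelike plane $V_+$ and annihilates $e_3$, hence is skew-adjoint for $g$ and commutes with $J$, which is exactly the characteristic behaviour of the canonical connection of the product structure (cf. \cite{ES}). This confirms Lemma~2.7.
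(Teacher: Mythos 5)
Your proposal is correct, and it is essentially the computation behind the paper's one-line justification: the paper proves nothing here, it simply cites Lemma 2.14 of \cite{Wa}, which is obtained exactly as you describe (Koszul's formula for the left-invariant Levi-Civita connection of $G_2$, then substitution into (2.3)). Your reformulation $\nabla^0_XY=\tfrac12\nabla_XY+\tfrac12 J\nabla_X(JY)$, i.e.\ taking the $J$-block-diagonal part of $\nabla$, is a correct and convenient equivalent of (2.3), and with the Levi-Civita values $\nabla_{e_2}e_1=-\gamma e_2+\tfrac{\alpha}{2}e_3$, $\nabla_{e_2}e_2=\gamma e_1$, $\nabla_{e_3}e_1=\tfrac{\alpha}{2}e_2+\gamma e_3$, $\nabla_{e_3}e_2=-\tfrac{\alpha}{2}e_1$, etc., it reproduces precisely the table in the Lemma.
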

\noindent By (2.8) and Lemma 2.6, we have
\begin{align}
&T^0(e_1,e_2)=\beta e_3,~~T^0(e_1,e_3)=(-\frac{\alpha}{2}+\beta)e_2+\gamma e_3,~~T^0(e_2,e_3)=-\frac{\alpha}{2} e_1.
\end{align}
By (2.9) and (2.29), we have
\vskip 0.5 true cm
\begin{lem}
The tensor $A^0$ of the canonical connection $\nabla^0$ of $(G_2,J)$ is given by
\begin{align}
&A^0(e_1,e_2)e_1=(\frac{\alpha\beta}{2}-\beta^2)e_2-\beta\gamma e_3,~~A^0(e_1,e_2)e_2=\frac{\alpha\beta}{2}e_1,~~A^0(e_1,e_2)e_3=0,\\\notag
&A^0(e_1,e_3)e_1=(\frac{\alpha\gamma}{2}-\beta\gamma)e_2+(\frac{\alpha\beta}{2}-\beta^2-\gamma^2)e_3
,~~A^0(e_1,e_3)e_2=\frac{\alpha\gamma}{2}e_1\\\notag
&A^0(e_1,e_3)e_3=(\frac{\alpha^2}{4}-\frac{\alpha\beta}{2})e_1,
~~A^0(e_2,e_3)e_1=0,
~~A^0(e_2,e_3)e_2=-\frac{\alpha\beta}{2}e_3,\\\notag
&A^0(e_2,e_3)e_3=(\frac{\alpha^2}{4}-\frac{\alpha\beta}{2})e_2-\frac{\alpha\gamma}{2}e_3
.\notag
\end{align}
\end{lem}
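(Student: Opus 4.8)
The plan is to expand the definition $A^0(X,Y)Z=T^0(T^0(X,Y),Z)$ using the $\mathbb{R}$-bilinearity and skew-symmetry of the torsion tensor, both of which follow at once from (2.8). Since $T^0(X,Y)=-T^0(Y,X)$ and $T^0(e_i,e_i)=0$, the values in (2.31) determine the full table of $T^0$ on the basis; in particular $T^0(e_2,e_1)=-\beta e_3$, $T^0(e_3,e_1)=(\tfrac{\alpha}{2}-\beta)e_2-\gamma e_3$, and $T^0(e_3,e_2)=\tfrac{\alpha}{2}e_1$.

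Then for each pair $(e_i,e_j)$ with $i<j$ and each $e_k$ I would set $v=T^0(e_i,e_j)$ as given by (2.31), expand $v$ in the basis $\{e_1,e_2,e_3\}$, and compute $T^0(v,e_k)$ termwise using the table above. For instance $A^0(e_1,e_3)e_3=T^0\big((-\tfrac{\alpha}{2}+\beta)e_2+\gamma e_3,\;e_3\big)=(-\tfrac{\alpha}{2}+\beta)T^0(e_2,e_3)=(-\tfrac{\alpha}{2}+\beta)(-\tfrac{\alpha}{2}e_1)=(\tfrac{\alpha^2}{4}-\tfrac{\alpha\beta}{2})e_1$, matching the stated formula; similarly $A^0(e_1,e_2)e_1=T^0(\beta e_3,e_1)=\beta T^0(e_3,e_1)=\beta\big((\tfrac{\alpha}{2}-\beta)e_2-\gamma e_3\big)=(\tfrac{\alpha\beta}{2}-\beta^2)e_2-\beta\gamma e_3$. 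Running through the remaining combinations in the same way produces exactly the list asserted in the lemma.

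The argument is purely mechanical. The only point demanding a little care is the consistent use of skew-symmetry: when the inner torsion $T^0(e_i,e_j)$ produces a vector with an $e_1$- or $e_2$-component, one must feed $T^0(e_2,e_1)$ or $T^0(e_3,e_1)$ into the outer slot rather than the oriented values listed in (2.31). No metric contraction enters (unlike in the definitions of $\rho^0$ or $a^0$), so the timelike sign of $e_3$ is irrelevant at this stage. Hence there is no real obstacle, and the lemma follows by direct substitution into (2.9) via (2.31).
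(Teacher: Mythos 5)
Your proposal is correct and follows exactly the paper's (implicit) route: direct substitution of the torsion values into $A^0(X,Y)Z=T^0(T^0(X,Y),Z)$ using bilinearity and skew-symmetry, and your sample computations match the stated formulas. The only quibble is a citation slip: the torsion components for $G_2$ appear in (2.29), not (2.31).
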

\vskip 0.5 true cm
By Lemma 2.7, (2.11) and (2.12), we have
\begin{align}
{\overline{A}}^0\left(\begin{array}{c}
e_1\\
e_2\\
e_3
\end{array}\right)=\left(\begin{array}{ccc}
-\alpha\beta+2\beta^2+\gamma^2&0&0\\
0&\alpha\beta&-\frac{\alpha\gamma}{2}\\
0&\frac{\alpha\gamma}{2}&\alpha\beta-\frac{\alpha^2}{2}
\end{array}\right)\left(\begin{array}{c}
e_1\\
e_2\\
e_3
\end{array}\right).
\end{align}
\noindent By (2.41) in \cite{Wa}, we have
\begin{align}
{\rm Ric}^0\left(\begin{array}{c}
e_1\\
e_2\\
e_3
\end{array}\right)=\left(\begin{array}{ccc}
-\left(\gamma^2+\frac{\alpha\beta}{2}\right)&0&0\\
0&-\left(\gamma^2+\frac{\alpha\beta}{2}\right)&0\\
0&\beta\gamma-\frac{\alpha\gamma}{2}&0
\end{array}\right)\left(\begin{array}{c}
e_1\\
e_2\\
e_3
\end{array}\right).
\end{align}
\noindent By (2.14), we have
\begin{align}
{{\rm Wan}}^0\left(\begin{array}{c}
e_1\\
e_2\\
e_3
\end{array}\right)=\left(\begin{array}{ccc}
\frac{\alpha\beta}{2}-2\beta^2-2\gamma^2&0&0\\
0&-(\gamma^2+\frac{3}{2}\alpha\beta)&\frac{\alpha\gamma}{2}\\
0&\beta\gamma-\alpha\gamma&\frac{\alpha^2}{2}-\alpha\beta
\end{array}\right)\left(\begin{array}{c}
e_1\\
e_2\\
e_3
\end{array}\right).
\end{align}
\noindent If $(G_2,g,J)$ is the first kind algebraic Wanas soliton associated to the connection $\nabla^0$, then
${\rm Wan}^0=c{\rm Id}+D$, so
\begin{align}
\left\{\begin{array}{l}
De_1=(\frac{\alpha\beta}{2}-2\beta^2-2\gamma^2-c)e_1,\\
De_2=-(\gamma^2+\frac{3}{2}\alpha\beta+c)e_2+\frac{\alpha\gamma}{2}e_3,\\
De_3=(\beta\gamma-\alpha\gamma)e_2+(\frac{\alpha^2}{2}-\alpha\beta-c)e_3\\
\end{array}\right.
\end{align}
Then we have
\vskip 0.5 true cm
\begin{thm}
$(G_2,g,J)$ is the first kind algebraic Wanas soliton associated to the connection $\nabla^0$ if and only if $\alpha=\beta=0$. $2\gamma^2+c=0$, $\gamma\neq 0$.
In particular,
\begin{align}
{D}\left(\begin{array}{c}
e_1\\
e_2\\
e_3
\end{array}\right)=\left(\begin{array}{ccc}
0&0&0\\
0&\gamma^2&0\\
0&0&2\gamma^2
\end{array}\right)\left(\begin{array}{c}
e_1\\
e_2\\
e_3
\end{array}\right).
\end{align}
\end{thm}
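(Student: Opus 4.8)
The plan is to reduce the statement to an elementary computation with derivations. Assuming first that $(G_2,g,J)$ is a first kind algebraic Wanas soliton associated to $\nabla^0$, the equations (2.38) express the action of the derivation $D$ on the pseudo-orthonormal basis $\{e_1,e_2,e_3\}$ in terms of $\alpha,\beta,\gamma,c$; the only remaining information to exploit is that $D$ satisfies the derivation condition (2.16) for the three brackets in (2.28). So I would substitute (2.28) and (2.38) into the three identities $D[e_1,e_2]=[De_1,e_2]+[e_1,De_2]$, $D[e_1,e_3]=[De_1,e_3]+[e_1,De_3]$ and $D[e_2,e_3]=[De_2,e_3]+[e_2,De_3]$, and compare the coefficients of $e_1,e_2,e_3$ on the two sides.

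Because $[e_2,e_3]=\alpha e_1$ has no $e_2$ or $e_3$ component, the last identity collapses to one scalar equation of the shape $\alpha\cdot(\text{quadratic in }\alpha,\beta,\gamma,c)=0$; the first two identities each yield two scalar equations (from the $e_2$- and $e_3$-slots), one of which is repeated, so altogether one gets four equations in $\alpha,\beta,\gamma,c$. I would use the equation obtained from the $e_2$-component of $D[e_1,e_2]=[De_1,e_2]+[e_1,De_2]$: after cancelling the nonzero factor $\gamma$ it is linear in $c$ and solves to $c=-\alpha\beta-\beta^2-2\gamma^2$. Substituting this back into the other three equations leaves a homogeneous polynomial system purely in $\alpha,\beta,\gamma$.

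The crux --- and the step I expect to be the main obstacle --- is extracting $\alpha=\beta=0$ from this system under the standing hypothesis $\gamma\neq 0$. The equation coming from $D[e_2,e_3]=[De_2,e_3]+[e_2,De_3]$ becomes $\alpha\big(2\alpha\beta-3\beta^2-3\gamma^2-\tfrac{\alpha^2}{2}\big)=0$, and I would complete the square in the second factor, $2\alpha\beta-3\beta^2-3\gamma^2-\tfrac{\alpha^2}{2}=-\tfrac12\big((\alpha-2\beta)^2+2\beta^2+6\gamma^2\big)$, which is strictly negative since $\gamma\neq0$; hence $\alpha=0$. Plugging $\alpha=0$ into either of the two remaining equations gives $-\beta(\beta^2+\gamma^2)=0$, and $\gamma\neq0$ forces $\beta=0$. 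Then $c=-\alpha\beta-\beta^2-2\gamma^2=-2\gamma^2$, i.e. $2\gamma^2+c=0$, and $\gamma\neq0$ is part of (2.28); this proves necessity. For the converse, if $\alpha=\beta=0$ and $2\gamma^2+c=0$ then (2.38) gives directly $De_1=0$, $De_2=\gamma^2 e_2$, $De_3=2\gamma^2 e_3$, the diagonal map displayed in (2.39). It remains only to check that this $D$ is a derivation of the Lie algebra, which with $\alpha=\beta=0$ has brackets $[e_1,e_2]=\gamma e_2$, $[e_1,e_3]=-\gamma e_3$, $[e_2,e_3]=0$; verifying (2.16) for these three brackets is immediate, and this completes the proof and pins down $D$.
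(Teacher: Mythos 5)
Your proposal is correct and follows essentially the same route as the paper: impose ${\rm Wan}^0=c\,{\rm Id}+D$, feed the resulting $D$ into the derivation condition for the three brackets of $G_2$, and solve the same four-equation polynomial system (the paper's (2.36)), with the converse checked directly. The only difference is cosmetic but pleasant: where the paper combines two of the equations to force $\alpha=10\beta$ and then a contradiction, you substitute $c=-\alpha\beta-\beta^2-2\gamma^2$ into the $\alpha(\cdot)=0$ equation and complete the square, $\tfrac{\alpha^2}{2}+3\beta^2+3\gamma^2-2\alpha\beta=\tfrac12\bigl((\alpha-2\beta)^2+2\beta^2+6\gamma^2\bigr)>0$ for $\gamma\neq0$, which kills $\alpha$ in one stroke.
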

\begin{proof}
By (2.16) and (2.34), we get
\begin{align}
\left\{\begin{array}{l}
\alpha\beta+\beta^2+2\gamma^2+c=0,\\
\beta(\frac{\alpha^2}{2}+2\beta^2+3\gamma^2+c)-\alpha\gamma^2=0,\\
\beta(2\beta^2+3\gamma^2+c-\alpha\beta-\frac{\alpha^2}{2})-2\alpha\gamma^2=0,\\
\alpha(\frac{\alpha^2}{2}+2\beta^2+\gamma^2-c-3\alpha\beta)=0.\\
\end{array}\right.
\end{align}
Using the second equation and the third equation in (2.36), we have
\begin{equation}
\alpha(\gamma^2+\beta^2+\alpha\beta)=0.
\end{equation}
Using the first equation and the fourth equation in (2.36), we have
\begin{equation}
\alpha(\frac{\alpha^2}{2}+3\beta^2+3\gamma^2-2\alpha\beta)=0.
\end{equation}
If $\alpha\neq 0$, by (2.37) and (2.38), we get $\alpha=10\beta$. Again by (2.37), we get $\gamma^2+11\beta^2=0$, then $\gamma=\beta=0$. This
is a contradiction. So $\alpha=0$, by the first equation and the sencond equation in (2.36), we have
\begin{equation}
\beta^2+2\gamma^2+c=0,~~\beta(2\beta^2+3\gamma^2+c)=0.
\end{equation}
So we get $\beta=0$ and $2\gamma^2+c=0$ by $\gamma\neq 0$.
\end{proof}
By (2.12), (2.13) and (2.33), we get
\begin{align}
{\widetilde{{\rm Wan}}}^0\left(\begin{array}{c}
e_1\\
e_2\\
e_3
\end{array}\right)=\left(\begin{array}{ccc}
\frac{\alpha\beta}{2}-2\beta^2-2\gamma^2&0&0\\
0&-(\gamma^2+\frac{3}{2}\alpha\beta)&\frac{3\alpha\gamma}{4}-\frac{\beta\gamma}{2}\\
0&\frac{\beta\gamma}{2}-\frac{3\alpha\gamma}{4}&\frac{\alpha^2}{2}-\alpha\beta
\end{array}\right)\left(\begin{array}{c}
e_1\\
e_2\\
e_3
\end{array}\right).
\end{align}
\noindent If $(G_2,g,J)$ is the second kind algebraic Wanas soliton associated to the connection $\nabla^0$, then
$\widetilde{{\rm Wan}}^0=c{\rm Id}+D$, so
\begin{align}
\left\{\begin{array}{l}
De_1=(\frac{\alpha\beta}{2}-2\beta^2-2\gamma^2-c)e_1,\\
De_2=-(\gamma^2+\frac{3}{2}\alpha\beta+c)e_2+(\frac{3\alpha\gamma}{4}-\frac{\beta\gamma}{2})e_3,\\
De_3=(\frac{\beta\gamma}{2}-\frac{3\alpha\gamma}{4})e_2+(\frac{\alpha^2}{2}-\alpha\beta-c)e_3.\\
\end{array}\right.
\end{align}
\noindent By (2.16) and (2.41), we get
\begin{align}
\left\{\begin{array}{l}
\alpha\beta+\beta^2+2\gamma^2+c=0,\\
\beta(\frac{\alpha^2}{2}+2\beta^2+3\gamma^2+c)-\gamma^2(\frac{3}{2}\alpha-\beta)=0,\\
\beta(2\beta^2+2\gamma^2+c-\alpha\beta-\frac{\alpha^2}{2})-\frac{3}{2}\alpha\gamma^2=0,\\
\alpha(\frac{\alpha^2}{2}+2\beta^2+\gamma^2-c-3\alpha\beta)=0.\\
\end{array}\right.
\end{align}
\noindent Similar to Theorem 2.8, Then
\vskip 0.5 true cm
\begin{thm}
$(G_2,g,J)$ is the second kind algebraic Wanas soliton associated to the connection $\nabla^0$ if and only if $\alpha=\beta=0$. $2\gamma^2+c=0$, $\gamma\neq 0$.
In particular,
\begin{align}
{D}\left(\begin{array}{c}
e_1\\
e_2\\
e_3
\end{array}\right)=\left(\begin{array}{ccc}
0&0&0\\
0&\gamma^2&0\\
0&0&2\gamma^2
\end{array}\right)\left(\begin{array}{c}
e_1\\
e_2\\
e_3
\end{array}\right).
\end{align}
\end{thm}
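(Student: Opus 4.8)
The plan is to follow the proof of Theorem 2.8 line by line, with the matrix (2.33) of ${\rm Wan}^0$ replaced by the symmetrized matrix (2.40) of $\widetilde{{\rm Wan}}^0$. First I would suppose $(G_2,g,J)$ is a second kind algebraic Wanas soliton, so that $\widetilde{{\rm Wan}}^0=c\,{\rm Id}+D$ for some real number $c$ and some derivation $D$ of $\mathfrak g$; reading (2.40) column by column then gives the action of $D$ on the pseudo-orthonormal basis $\{e_1,e_2,e_3\}$, which is exactly (2.41). Next I would impose the derivation identity (2.16) on the three brackets of (2.27), namely $[e_1,e_2]=\gamma e_2-\beta e_3$, $[e_1,e_3]=-\beta e_2-\gamma e_3$ and $[e_2,e_3]=\alpha e_1$; substituting (2.41) and comparing the coefficients of $e_1,e_2,e_3$ produces the four polynomial equations (2.42) in the parameters $\alpha,\beta,\gamma,c$.

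The core of the argument is to solve (2.42), and I would split on whether $\alpha=0$. If $\alpha\neq0$, eliminating $c$ between the first and fourth equations gives $\tfrac{\alpha^2}{2}+3\beta^2+3\gamma^2-2\alpha\beta=0$ (the analogue of (2.38)), while subtracting the third equation from the second gives $\beta(\alpha^2+2\gamma^2+\alpha\beta)=0$. In the subcase $\beta\neq0$ the second relation yields $2\gamma^2=-\alpha(\alpha+\beta)$, and feeding this into the first forces $t:=\alpha/\beta$ to satisfy $2t^2+7t-6=0$; both roots of this quadratic lie outside the interval $[-1,0]$ that is required for $\alpha(\alpha+\beta)\le0$, so $\gamma^2<0$, a contradiction. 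In the subcase $\beta=0$ the first relation reduces to $\tfrac{\alpha^2}{2}+3\gamma^2=0$, forcing $\alpha=\gamma=0$, which contradicts $\gamma\neq0$ in (2.27). Hence $\alpha=0$. With $\alpha=0$ the first equation of (2.42) reads $\beta^2+2\gamma^2+c=0$, and substituting $c=-\beta^2-2\gamma^2$ into the third equation gives $\beta^3=0$; therefore $\beta=0$ and $2\gamma^2+c=0$, which (together with $\gamma\neq0$) is the claimed characterization.

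For the converse direction I would just verify that the data $\alpha=\beta=0$, $c=-2\gamma^2$, $\gamma\neq0$ indeed works: by (2.41) it gives $De_1=0$, $De_2=\gamma^2e_2$ and $De_3=2\gamma^2e_3$, and a direct check of (2.16) on the brackets $[e_1,e_2]=\gamma e_2$, $[e_1,e_3]=-\gamma e_3$, $[e_2,e_3]=0$ shows this $D$ is a derivation of $\mathfrak g$, so $(G_2,g,J)$ is a second kind algebraic Wanas soliton with $D$ as displayed in (2.43). I expect the only real obstacle to be the polynomial case analysis of (2.42): because the symmetrization (2.13) perturbs the off-diagonal entries of the Wanas operator, the auxiliary identities are slightly different from (2.37)--(2.39), so one must recheck carefully that the branch $\alpha\neq0$ still terminates in $\gamma^2<0$; apart from that bookkeeping, the proof is structurally identical to that of Theorem 2.8.
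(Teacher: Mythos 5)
Your proposal is correct and follows essentially the same route as the paper: derive the derivation equations (2.42) from $\widetilde{{\rm Wan}}^0=c\,{\rm Id}+D$ and solve them by a case split on $\alpha$, just as the paper does for Theorem 2.8 and then invokes ``similarly'' for Theorem 2.9. In fact you supply the details the paper omits (the elimination now yields $\beta(\alpha^2+2\gamma^2+\alpha\beta)=0$ instead of (2.37), and your sign argument on $2t^2+7t-6=0$ correctly kills the branch $\alpha\neq0$), and your computations check out, including the verification that $D$ is a derivation in the converse direction.
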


\vskip 0.5 true cm
\noindent{\bf 2.3 Algebraic Wanas solitons of $G_3$}\\
\vskip 0.5 true cm
By (2.3) in \cite{BO}, we have for $G_3$, there exists a pseudo-orthonormal basis $\{e_1,e_2,e_3\}$ with $e_3$ timelike such that the Lie
algebra of $G_3$ satisfies
\begin{equation}
[e_1,e_2]=-\gamma e_3,~~[e_1,e_3]=-\beta e_2,~~[e_2,e_3]=\alpha e_1.
\end{equation}
Let
\begin{equation}
a_1=\frac{1}{2}(\alpha-\beta-\gamma),~~a_2=\frac{1}{2}(\alpha-\beta+\gamma),~~a_3=\frac{1}{2}(\alpha+\beta-\gamma).
\end{equation}
By Lemma 2.24 in \cite{Wa}, we have
\vskip 0.5 true cm
\begin{lem}
The canonical connection $\nabla^0$ of $(G_3,J)$ is given by
\begin{align}
&\nabla^0_{e_1}e_1=0,~~\nabla^0_{e_1}e_2=0,~~\nabla^0_{e_1}e_3=0,\\\notag
&\nabla^0_{e_2}e_1=0,~~\nabla^0_{e_2}e_2=0,~~\nabla^0_{e_2}e_3=0,\\\notag
&\nabla^0_{e_3}e_1=a_3e_2,~~\nabla^0_{e_3}e_2=-a_3e_1,~~\nabla^0_{e_3}e_3=0.
\notag
\end{align}
\end{lem}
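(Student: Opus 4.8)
The plan is to obtain $\nabla^0$ in three steps: first compute the Levi-Civita connection $\nabla$ of $(G_3,g)$, then evaluate the tensor $\nabla J$ in the frame $\{e_1,e_2,e_3\}$, and finally substitute both into the defining identity $\nabla^0_XY=\nabla_XY-\frac12(\nabla_XJ)JY$ of (2.3).

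For the first step I would feed the bracket relations (2.45) into the Koszul formula
\[
2g(\nabla_XY,Z)=g([X,Y],Z)-g([Y,Z],X)+g([Z,X],Y),
\]
keeping in mind the signature $g(e_1,e_1)=g(e_2,e_2)=1$, $g(e_3,e_3)=-1$. This is the content of Lemma 2.24 in \cite{Wa} and gives
\begin{align*}
&\nabla_{e_1}e_2=a_1e_3,\quad \nabla_{e_1}e_3=a_1e_2,\quad \nabla_{e_2}e_1=a_2e_3,\\
&\nabla_{e_2}e_3=a_2e_1,\quad \nabla_{e_3}e_1=a_3e_2,\quad \nabla_{e_3}e_2=-a_3e_1,
\end{align*}
all other $\nabla_{e_i}e_j$ vanishing, with $a_1,a_2,a_3$ as in (2.46); these can be cross-checked with torsion-freeness $\nabla_{e_i}e_j-\nabla_{e_j}e_i=[e_i,e_j]$.

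For the second step I would compute $(\nabla_{e_i}J)e_j=\nabla_{e_i}(Je_j)-J(\nabla_{e_i}e_j)$ using (2.2). Since $J$ acts as $+1$ on $e_1,e_2$ and as $-1$ on $e_3$, a term survives only when $\nabla_{e_i}e_j$ carries a vector from one eigenspace of $J$ to the other; because $\nabla_{e_3}$ preserves each eigenspace this forces $\nabla_{e_3}J\equiv0$, while $(\nabla_{e_1}J)e_2=2a_1e_3$, $(\nabla_{e_1}J)e_3=-2a_1e_2$, $(\nabla_{e_2}J)e_1=2a_2e_3$, $(\nabla_{e_2}J)e_3=-2a_2e_1$, and the remaining components vanish. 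Substituting into (2.3), the derivatives along $e_1$ and $e_2$ cancel exactly against their correction terms $-\frac12(\nabla_XJ)JY$, whereas the derivatives along $e_3$ survive untouched since $\nabla_{e_3}J=0$; this reproduces the stated table. The computation is routine throughout, and the only place one can slip is the sign bookkeeping forced by the timelike direction, both in the Koszul formula and in $Je_3=-e_3$.
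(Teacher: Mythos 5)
Your computation is correct: the Levi-Civita table obtained from the Koszul formula, the components $(\nabla_{e_1}J)e_2=2a_1e_3$, $(\nabla_{e_1}J)e_3=-2a_1e_2$, $(\nabla_{e_2}J)e_1=2a_2e_3$, $(\nabla_{e_2}J)e_3=-2a_2e_1$, $\nabla_{e_3}J=0$, and the cancellations in $\nabla^0_XY=\nabla_XY-\frac12(\nabla_XJ)JY$ all check out and reproduce the stated table. The paper gives no independent argument here — it simply quotes Lemma 2.24 of \cite{Wa}, which is derived by exactly this computation — so your proof supplies the same route in full detail.
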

\noindent By (2.8) and Lemma 2.10, we have
\begin{align}
&T^0(e_1,e_2)=\gamma e_3,~~T^0(e_1,e_3)=-a_1e_2,~~T^0(e_2,e_3)=-a_2 e_1.
\end{align}
By (2.9) and (2.47), we have
\vskip 0.5 true cm
\begin{lem}
The tensor $A^0$ of the canonical connection $\nabla^0$ of $(G_3,J)$ is given by
\begin{align}
&A^0(e_1,e_2)e_1=a_1\gamma e_2,~~A^0(e_1,e_2)e_2=a_2\gamma e_1,~~A^0(e_1,e_2)e_3=0,\\\notag
&A^0(e_1,e_3)e_1=a_1\gamma e_3
,~~A^0(e_1,e_3)e_2=0,
~~A^0(e_1,e_3)e_3=a_1a_2e_1,\\\notag
&A^0(e_2,e_3)e_1=0,
~~A^0(e_2,e_3)e_2=-a_2\gamma e_3,~~A^0(e_2,e_3)e_3=a_1a_2e_2
.\notag
\end{align}
\end{lem}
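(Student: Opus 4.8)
The plan is to feed the three torsion values recorded in (2.47) into the definition (2.9), $A^0(X,Y)Z = T^0(T^0(X,Y),Z)$, using only $\mathbb{R}$-bilinearity of $T^0$ in each slot together with its skew-symmetry — which is immediate from (2.8) — and the fact that $T^0(e_i,e_i)=0$. No information beyond Lemma 2.10 (equivalently, (2.47)) is required.

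First I would rewrite (2.47) in the order needed for the inner argument, e.g. $T^0(e_3,e_1) = a_1 e_2$, $T^0(e_3,e_2) = a_2 e_1$, $T^0(e_2,e_1) = -\gamma e_3$, and so on. Then I would treat the three pairs in turn. For the pair $(e_1,e_2)$, since $T^0(e_1,e_2) = \gamma e_3$, bilinearity gives $A^0(e_1,e_2)Z = \gamma\, T^0(e_3,Z)$, and evaluating at $Z = e_1,e_2,e_3$ yields the first line of the claimed formula. For $(e_1,e_3)$, $T^0(e_1,e_3) = -a_1 e_2$ gives $A^0(e_1,e_3)Z = -a_1\, T^0(e_2,Z)$; for $(e_2,e_3)$, $T^0(e_2,e_3) = -a_2 e_1$ gives $A^0(e_2,e_3)Z = -a_2\, T^0(e_1,Z)$. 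Substituting $Z = e_1,e_2,e_3$ into each and reading off (2.47) produces the remaining six entries, which one checks agree with the stated values.

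Since the scalars $a_1, a_2, a_3$ are merely the abbreviations introduced in (2.46), no manipulation of $\alpha,\beta,\gamma$ is needed at this stage. The computation is entirely mechanical; the only point requiring care is tracking the sign changes forced by skew-symmetry whenever the argument order differs from that in (2.47), so I do not anticipate any genuine obstacle.
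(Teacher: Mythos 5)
Your proposal is correct and is exactly the paper's own route: the lemma is obtained by substituting the torsion values (2.47) into the definition (2.9), $A^0(X,Y)Z=T^0(T^0(X,Y),Z)$, and expanding by bilinearity and skew-symmetry of $T^0$. The nine resulting entries match the stated formulas, so nothing further is needed.
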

\vskip 0.5 true cm
By Lemma 2¡£11, (2.11) and (2.12), we have
\begin{align}
{\overline{A}}^0\left(\begin{array}{c}
e_1\\
e_2\\
e_3
\end{array}\right)=\left(\begin{array}{ccc}
-2a_1\gamma&0&0\\
0&2a_2 \gamma&0\\
0&0&-2a_1a_2
\end{array}\right)\left(\begin{array}{c}
e_1\\
e_2\\
e_3
\end{array}\right).
\end{align}
\noindent By (2.64) in \cite{Wa}, we have
\begin{align}
{\rm Ric}^0\left(\begin{array}{c}
e_1\\
e_2\\
e_3
\end{array}\right)=\left(\begin{array}{ccc}
-\gamma a_3&0&0\\
0&-\gamma a_3&0\\
0&0&0
\end{array}\right)\left(\begin{array}{c}
e_1\\
e_2\\
e_3
\end{array}\right).
\end{align}
\noindent By (2.14), we have
\begin{align}
{{\rm Wan}}^0\left(\begin{array}{c}
e_1\\
e_2\\
e_3
\end{array}\right)=\left(\begin{array}{ccc}
-\gamma a_3+2a_1\gamma&0&0\\
0&-\gamma a_3-2a_2\gamma&0\\
0&0&2a_1a_2
\end{array}\right)\left(\begin{array}{c}
e_1\\
e_2\\
e_3
\end{array}\right).
\end{align}
\noindent If $(G_3,g,J)$ is the first kind algebraic Wanas soliton associated to the connection $\nabla^0$, then
${\rm Wan}^0=c{\rm Id}+D$, so
\begin{align}
\left\{\begin{array}{l}
De_1=(-\gamma a_3+2a_1\gamma-c)e_1,\\
De_2=-(\gamma a_3+2a_2\gamma+c)e_2,\\
De_3=(2a_1a_2-c)e_3\\
\end{array}\right.
\end{align}
By (2.16) and (2.52), we have
\begin{align}
\left\{\begin{array}{l}
\gamma(-2\gamma a_3+2a_1\gamma-2a_2\gamma-c-2a_1a_2)=0,\\
\beta(2a_1\gamma+2a_2\gamma+2a_1a_2-c)=0,\\
\alpha(2a_1\gamma+2a_2\gamma-2a_1a_2+c)=0.\\
\end{array}\right.
\end{align}
Solving (2.53), we have
\vskip 0.5 true cm
\begin{thm}
$(G_3,g,J)$ is the first kind algebraic Wanas soliton associated to the connection $\nabla^0$ if and only if \\
\noindent (i)$\alpha\neq 0$, $\beta\neq 0$, $\gamma=0$, $c=\frac{1}{2}(\alpha-\beta)^2.$ In particular,
 \begin{align}
{D}\left(\begin{array}{c}
e_1\\
e_2\\
e_3
\end{array}\right)=\left(\begin{array}{ccc}
-\frac{1}{2}(\alpha-\beta)^2&0&0\\
0&-\frac{1}{2}(\alpha-\beta)^2&0\\
0&0&0
\end{array}\right)\left(\begin{array}{c}
e_1\\
e_2\\
e_3 \notag \\
\end{array}\right).
\end{align}
\noindent (ii)$\alpha=\beta=\gamma=0$. In particular,
 \begin{align}
{D}\left(\begin{array}{c}
e_1\\
e_2\\
e_3
\end{array}\right)=\left(\begin{array}{ccc}
-c&0&0\\
0&-c&0\\
0&0&-c
\end{array}\right)\left(\begin{array}{c}
e_1\\
e_2\\
e_3 \notag \\
\end{array}\right).
\end{align}
\noindent (iii) $\alpha=\beta=0$, $\gamma\neq 0$, $c=-\frac{1}{2}\gamma^2.$ In particular,
\begin{align}
{D}\left(\begin{array}{c}
e_1\\
e_2\\
e_3
\end{array}\right)=\left(\begin{array}{ccc}
0&0&0\\
0&0&0\\
0&0&0
\end{array}\right)\left(\begin{array}{c}
e_1\\
e_2\\
e_3 \notag \\
\end{array}\right).
\end{align}
\noindent (iv) $\alpha=\gamma=0$, $\beta\neq 0$, $c=\frac{1}{2}\beta^2$. In particular,
\begin{align}
{D}\left(\begin{array}{c}
e_1\\
e_2\\
e_3
\end{array}\right)=\left(\begin{array}{ccc}
-\frac{1}{2}\beta^2&0&0\\
0&-\frac{1}{2}\beta^2&0\\
0&0&0
\end{array}\right)\left(\begin{array}{c}
e_1\\
e_2\\
e_3 \notag \\
\end{array}\right).
\end{align}
\noindent (v) $\alpha=0$, $\beta\neq 0$, $\gamma\neq 0$, $\beta=\gamma$, $c=-2\beta^2$.
 In particular,
\begin{align}
{D}\left(\begin{array}{c}
e_1\\
e_2\\
e_3
\end{array}\right)=\left(\begin{array}{ccc}
0&0&0\\
0&2\beta^2&0\\
0&0&2\beta^2
\end{array}\right)\left(\begin{array}{c}
e_1\\
e_2\\
e_3 \notag \\
\end{array}\right).
\end{align}
\noindent (vi) $\alpha\neq 0$, $\beta=\gamma=0$, $c=\frac{1}{2}\alpha^2$.
In particular,
\begin{align}
{D}\left(\begin{array}{c}
e_1\\
e_2\\
e_3
\end{array}\right)=\left(\begin{array}{ccc}
-\frac{1}{2}\alpha^2&0&0\\
0&-\frac{1}{2}\alpha^2&0\\
0&0&0
\end{array}\right)\left(\begin{array}{c}
e_1\\
e_2\\
e_3 \notag \\
\end{array}\right).
\end{align}
\noindent (vii) $\alpha\neq 0$, $\beta=0$, $\gamma\neq 0$, $\alpha=\gamma$, $c=-2\alpha^2$.
In particular,
 \begin{align}
{D}\left(\begin{array}{c}
e_1\\
e_2\\
e_3
\end{array}\right)=\left(\begin{array}{ccc}
2\alpha^2&0&0\\
0&0&0\\
0&0&2\alpha^2
\end{array}\right)\left(\begin{array}{c}
e_1\\
e_2\\
e_3 \notag \\
\end{array}\right).
\end{align}
\end{thm}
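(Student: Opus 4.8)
The plan is to solve the system (2.53) by exploiting its factored structure: each equation is one of $\alpha,\beta,\gamma$ multiplied by a scalar factor, which I denote $f_1,f_2,f_3$ respectively, so that (2.53) reads $\gamma f_1=0$, $\beta f_2=0$, $\alpha f_3=0$. First I would make these factors explicit using (2.45). With $2a_1a_2=\tfrac12[(\alpha-\beta)^2-\gamma^2]$, $2a_1\gamma+2a_2\gamma=2\gamma(\alpha-\beta)$, $2a_1\gamma-2a_2\gamma=-2\gamma^2$ and $2\gamma a_3=\gamma(\alpha+\beta-\gamma)$, a routine simplification gives
\begin{align}
f_1&=-\gamma(\alpha+\beta)-\tfrac12\gamma^2-\tfrac12(\alpha-\beta)^2-c,\notag\\
f_2&=2\gamma(\alpha-\beta)+\tfrac12(\alpha-\beta)^2-\tfrac12\gamma^2-c,\notag\\
f_3&=2\gamma(\alpha-\beta)-\tfrac12(\alpha-\beta)^2+\tfrac12\gamma^2+c.\notag
\end{align}

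Next I would split into the eight patterns determined by which of $\alpha,\beta,\gamma$ vanish, using the principle that a nonzero constant forces its companion factor to vanish. The four patterns with at most one nonzero constant are immediate. If $\alpha=\beta=\gamma=0$, all three equations hold for every $c$ and (2.52) yields $D=-c\,{\rm Id}$ (case (ii)). If only $\gamma\neq0$, then $f_1=0$ gives $c=-\tfrac12\gamma^2$ and $D=0$ (case (iii)); if only $\beta\neq0$, then $f_2=0$ gives $c=\tfrac12\beta^2$ (case (iv)); if only $\alpha\neq0$, then $f_3=0$ gives $c=\tfrac12\alpha^2$ (case (vi)). In each I would read the diagonal entries of $D$ directly from (2.52).

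The substance lies in the patterns with two or three nonzero constants, handled through the combinations $f_2+f_3=4\gamma(\alpha-\beta)$ and $f_2-f_3=(\alpha-\beta)^2-\gamma^2-2c$, together with $f_1-f_2=\beta(\gamma-\beta)$ when $\alpha=0$ and $f_1+f_3=\alpha(\gamma-\alpha)$ when $\beta=0$. For $\gamma=0$, $\alpha\beta\neq0$ the factor $f_2+f_3$ vanishes automatically and $f_2-f_3=0$ forces $c=\tfrac12(\alpha-\beta)^2$, so $f_2=f_3=0$ are compatible (case (i)). For $\alpha=0$, $\beta\gamma\neq0$ the pair $f_1=f_2=0$ requires $f_1-f_2=\beta(\gamma-\beta)=0$, hence $\beta=\gamma$ and $c=-2\beta^2$ (case (v)); symmetrically $\beta=0$, $\alpha\gamma\neq0$ gives $\alpha=\gamma$ and $c=-2\alpha^2$ via $f_1+f_3=0$ (case (vii)). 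The decisive step is the fully nonzero pattern: there $f_2+f_3=0$ forces $\alpha=\beta$, then $f_2-f_3=0$ gives $c=-\tfrac12\gamma^2$, and substituting into $f_1$ leaves $f_1=-2\alpha\gamma$, which is nonzero since $\alpha,\gamma\neq0$, a contradiction. I expect this exclusion, showing the three equations to be genuinely incompatible when no constant vanishes rather than merely constraining $c$, to be the main obstacle. Assembling the seven surviving patterns with their diagonal derivations $D$ read off from (2.52) completes the classification.
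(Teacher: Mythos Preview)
Your proposal is correct and follows the same route as the paper, namely solving the factored system (2.53) by case analysis on which of $\alpha,\beta,\gamma$ vanish; the paper itself merely writes ``Solving (2.53), we have'' and states the seven cases without details. Your explicit computation of the factors $f_1,f_2,f_3$, the linear combinations $f_2\pm f_3$, and the exclusion of the $\alpha\beta\gamma\neq0$ pattern via $f_1=-2\alpha\gamma$ fill in exactly what the paper leaves to the reader.
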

\vskip 0.5 true cm
\indent By (2.51), we have $\widetilde{{\rm Wan}}={\rm Wan}$. So $(G_3,g,J)$ is the first kind algebraic Wanas soliton associated to the connection $\nabla^0$ if and only if $(G_3,g,J)$ is the second kind algebraic Wanas soliton associated to the connection $\nabla^0$.
\vskip 0.5 true cm
\noindent{\bf 2.4 Algebraic Wanas solitons of $G_4$}\\

By (2.4) in \cite{BO}, we have for $G_4$, there exists a pseudo-orthonormal basis $\{e_1,e_2,e_3\}$ with $e_3$ timelike such that the Lie
algebra of $G_4$ satisfies
 \begin{align}
[e_1,e_2]=-e_2+(2\eta-\beta)e_3,~~\eta=1~{\rm or}-1,~~[e_1,e_3]=-\beta e_2+ e_3,~~[e_2,e_3]=\alpha e_1.
\end{align}
Let
\begin{equation}
b_1=\frac{\alpha}{2}+\eta-\beta,~~b_2=\frac{\alpha}{2}-\eta,~~b_3=\frac{\alpha}{2}+\eta.
\end{equation}
By Lemma 2.32 in \cite{Wa}, we have
\vskip 0.5 true cm
\begin{lem}
The canonical connection $\nabla^0$ of $(G_4,J)$ is given by
\begin{align}
&\nabla^0_{e_1}e_1=0,~~\nabla^0_{e_1}e_2=0,~~\nabla^0_{e_1}e_3=0,\\\notag
&\nabla^0_{e_2}e_1= e_2,~~\nabla^0_{e_2}e_2=- e_1,~~\nabla^0_{e_2}e_3=0,\\\notag
&\nabla^0_{e_3}e_1=b_3e_2,~~\nabla^0_{e_3}e_2=-b_3e_1,~~\nabla^0_{e_3}e_3=0.
\notag
\end{align}
\end{lem}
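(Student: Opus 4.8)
The plan is to read $\nabla^0$ off directly from its defining formula (2.3), namely $\nabla^0_XY=\nabla_XY-\frac{1}{2}(\nabla_XJ)JY$, so the first task is to have the Levi-Civita connection $\nabla$ of $(G_4,g)$ in hand. This connection is recorded in \cite{BO}, but one may also recover it from the Koszul formula
\[
2g(\nabla_{e_i}e_j,e_k)=g([e_i,e_j],e_k)-g([e_j,e_k],e_i)+g([e_k,e_i],e_j)
\]
applied to the brackets (2.58), remembering throughout that $e_3$ is timelike. A short computation, conveniently checked against the identity $\nabla_{e_i}e_j-\nabla_{e_j}e_i=[e_i,e_j]$, gives, with $b_1,b_2,b_3$ as in (2.59),
\begin{align*}
&\nabla_{e_1}e_1=0,\quad \nabla_{e_1}e_2=b_1e_3,\quad \nabla_{e_1}e_3=b_1e_2,\\
&\nabla_{e_2}e_1=e_2+b_2e_3,\quad \nabla_{e_2}e_2=-e_1,\quad \nabla_{e_2}e_3=b_2e_1,\\
&\nabla_{e_3}e_1=b_3e_2-e_3,\quad \nabla_{e_3}e_2=-b_3e_1,\quad \nabla_{e_3}e_3=-e_1.
\end{align*}

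Next I would compute the correction tensor $(\nabla_{e_i}J)e_j=\nabla_{e_i}(Je_j)-J(\nabla_{e_i}e_j)$, using $Je_1=e_1$, $Je_2=e_2$, $Je_3=-e_3$ from (2.2). Since $J$ acts as $+1$ on ${\rm span}\{e_1,e_2\}$ and as $-1$ on ${\rm span}\{e_3\}$, the quantity $(\nabla_{e_i}J)e_j$ equals, up to sign, twice the component of $\nabla_{e_i}e_j$ lying in the $J$-eigenspace complementary to the one containing $e_j$; hence $\frac{1}{2}(\nabla_{e_i}J)Je_j$ --- which is $\frac{1}{2}(\nabla_{e_i}J)e_j$ when $j=1,2$ and $-\frac{1}{2}(\nabla_{e_i}J)e_3$ when $j=3$ --- is exactly the ``mixed'' part of $\nabla_{e_i}e_j$ with respect to the splitting ${\rm span}\{e_1,e_2\}\oplus{\rm span}\{e_3\}$. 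Subtracting it as prescribed by (2.3) therefore removes precisely the terms $b_1e_3$, $b_2e_3$, $b_1e_2$, $b_2e_1$, the $-e_3$ in $\nabla_{e_3}e_1$ and the $-e_1$ in $\nabla_{e_3}e_3$, and the stated table is what is left.

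The only genuine obstacle here is clerical rather than conceptual: one must keep accurate track of the sign contributions arising from $e_3$ being timelike (in the Koszul formula) and from $Je_3=-e_3$ (in the correction term), throughout all nine covariant derivatives. Beyond this bookkeeping the computation is routine, and in fact the displayed formulas are nothing more than Lemma 2.32 of \cite{Wa} restated; the argument sketched above is simply the route by which that lemma is verified.
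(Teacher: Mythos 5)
Your proposal is correct: the Levi-Civita table you obtain from the Koszul formula for the brackets (2.58) checks out (including the signs coming from $g(e_3,e_3)=-1$), and your observation that subtracting $\tfrac{1}{2}(\nabla_X J)JY$ precisely deletes the component of $\nabla_XY$ in the $J$-eigenspace opposite to the one containing $Y$ is exactly right, so projecting away the mixed terms $b_1e_3$, $b_1e_2$, $b_2e_3$, $b_2e_1$, $-e_3$, $-e_1$ yields the stated table. The paper gives no argument of its own here --- it simply quotes Lemma 2.32 of \cite{Wa} --- and your computation is precisely the verification underlying that citation, so you have supplied the omitted proof rather than a different one.
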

\vskip 0.5 true cm
\noindent By (2.8) and Lemma 2.13, we have
\begin{align}
&T^0(e_1,e_2)=(\beta-2\eta) e_3,~~T^0(e_1,e_3)=-b_1e_2-e_3,~~T^0(e_2,e_3)=-b_2 e_1.
\end{align}
\noindent By (2.9) and (2.57), we have
\vskip 0.5 true cm
\begin{lem}
The tensor $A^0$ of the canonical connection $\nabla^0$ of $(G_4,J)$ is given by
\begin{align}
&A^0(e_1,e_2)e_1=(2\eta-\beta)(-b_1e_2-e_3)
,~~A^0(e_1,e_2)e_2=(\beta-2\eta)b_2e_1,
~~A^0(e_1,e_2)e_3=0,\\\notag
&A^0(e_1,e_3)e_1=-b_1e_2+[b_1(\beta-2\eta)-1]e_3
,~~A^0(e_1,e_3)e_2=-b_2e_1\\\notag
&A^0(e_1,e_3)e_3=b_1b_2e_1,
~~A^0(e_2,e_3)e_1=0,
~~A^0(e_2,e_3)e_2=b_2(2\eta-\beta)e_3,\\\notag
&A^0(e_2,e_3)e_3=b_2(b_1e_2+e_3)
.\notag
\end{align}
\end{lem}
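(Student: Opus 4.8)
The plan is to read everything off directly from the defining identity $A^0(X,Y)Z=T^0(T^0(X,Y),Z)$ in (2.9), using the torsion values already recorded in (2.57); nothing more from Lemma 2.13 is needed, since $A^0$ is built solely out of $T^0$. Concretely, for each ordered pair $(e_i,e_j)$ with $i<j$ I would first substitute $T^0(e_i,e_j)$ from (2.57), then apply $T^0(\,\cdot\,,e_k)$ for $k=1,2,3$, expanding by bilinearity and using the antisymmetry $T^0(e_s,e_t)=-T^0(e_t,e_s)$ together with $T^0(e_s,e_s)=0$.

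For the pair $(e_1,e_2)$ one has $T^0(e_1,e_2)=(\beta-2\eta)e_3$, so $A^0(e_1,e_2)(e_k)=(\beta-2\eta)\,T^0(e_3,e_k)$; substituting $T^0(e_3,e_1)=-T^0(e_1,e_3)=b_1e_2+e_3$, $T^0(e_3,e_2)=-T^0(e_2,e_3)=b_2e_1$ and $T^0(e_3,e_3)=0$ yields the first line of the statement. The pair $(e_2,e_3)$ has the same shape: $T^0(e_2,e_3)=-b_2e_1$, hence $A^0(e_2,e_3)(e_k)=-b_2\,T^0(e_1,e_k)$, and plugging in $T^0(e_1,e_1)=0$ together with $T^0(e_1,e_2)$ and $T^0(e_1,e_3)$ from (2.57) produces the last line.

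The only case with any arithmetic to it is $(e_1,e_3)$, where $T^0(e_1,e_3)=-b_1e_2-e_3$ is a genuine combination of two basis vectors, so $A^0(e_1,e_3)(e_k)=-b_1\,T^0(e_2,e_k)-T^0(e_3,e_k)$; I would expand each of the two terms using (2.57) and antisymmetry and then collect the coefficients of $e_1$, $e_2$, $e_3$, which gives the $e_3$-coefficient $b_1(\beta-2\eta)-1$ appearing in $A^0(e_1,e_3)e_1$ and the term $b_1b_2e_1$ in $A^0(e_1,e_3)e_3$. Assembling the nine vectors obtained in these three computations gives exactly the list asserted. I expect no conceptual obstacle; the only thing to watch is the sign bookkeeping from $T^0(e_j,e_i)=-T^0(e_i,e_j)$, most delicately in the $(e_1,e_3)$ case, so the real task is simply keeping the linear algebra tidy.
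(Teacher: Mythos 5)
Your proposal is correct and follows exactly the paper's route: the paper derives this lemma directly from the definition $A^0(X,Y)Z=T^0(T^0(X,Y),Z)$ in (2.9) together with the torsion components in (2.57), which is precisely the bilinearity-plus-antisymmetry expansion you describe. All nine resulting vectors check out, including the two nontrivial collections $-b_1e_2+[b_1(\beta-2\eta)-1]e_3$ and $b_1b_2e_1$ in the $(e_1,e_3)$ case.
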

\vskip 0.5 true cm
By Lemma 2.14, (2.11) and (2.12), we have
\begin{align}
{\overline{A}}^0\left(\begin{array}{c}
e_1\\
e_2\\
e_3
\end{array}\right)=\left(\begin{array}{ccc}
2b_1(2\eta-\beta)+1&0&0\\
0&2b_2(\beta-2\eta)&b_2\\
0&-b_2&-2b_1b_2
\end{array}\right)\left(\begin{array}{c}
e_1\\
e_2\\
e_3
\end{array}\right).
\end{align}
\noindent By (2.78) in \cite{Wa}, we have
\begin{align}
{\rm Ric}^0\left(\begin{array}{c}
e_1\\
e_2\\
e_3
\end{array}\right)=\left(\begin{array}{ccc}
b_3(2\eta-\beta)-1&0&0\\
0&b_3(2\eta-\beta)-1&0\\
0&b_3-\beta&0
\end{array}\right)\left(\begin{array}{c}
e_1\\
e_2\\
e_3
\end{array}\right).
\end{align}
\noindent By (2.14), we have
\begin{align}
{{\rm Wan}}^0\left(\begin{array}{c}
e_1\\
e_2\\
e_3
\end{array}\right)=\left(\begin{array}{ccc}
(b_3-2b_1)(2\eta-\beta)-2&0&0\\
0&(b_3+2b_2)(2\eta-\beta)-1&-b_2\\
0&b_3+b_2-\beta&2b_1b_2
\end{array}\right)\left(\begin{array}{c}
e_1\\
e_2\\
e_3
\end{array}\right).
\end{align}
\noindent If $(G_4,g,J)$ is the first kind algebraic Wanas soliton associated to the connection $\nabla^0$, then
${\rm Wan}^0=c{\rm Id}+D$, so
\begin{align}
\left\{\begin{array}{l}
De_1=[(b_3-2b_1)(2\eta-\beta)-2-c]e_1,\\
De_2=[(b_3+2b_2)(2\eta-\beta)-1-c]e_2-b_2e_3,\\
De_3=(b_3+b_2-\beta)e_2+(2b_1b_2-c)e_3.\\
\end{array}\right.
\end{align}
Then we have
\vskip 0.5 true cm
\begin{thm}
$(G_4,g,J)$ is the first kind algebraic Wanas soliton associated to the connection $\nabla^0$ if and only if\\
 \noindent (i) $\alpha=\beta=0$, $c=-4$.
In particular,
\begin{align}
{D}\left(\begin{array}{c}
e_1\\
e_2\\
e_3
\end{array}\right)=\left(\begin{array}{ccc}
0&0&0\\
0&1&\eta\\
0&0&2
\end{array}\right)\left(\begin{array}{c}
e_1\\
e_2\\
e_3 \notag \\
\end{array}\right).
\end{align}
\noindent (ii) $\alpha=0$, $\beta=\eta$, $c=-1$.
In particular,
\begin{align}
{D}\left(\begin{array}{c}
e_1\\
e_2\\
e_3
\end{array}\right)=\left(\begin{array}{ccc}
0&0&0\\
0&-1&\eta\\
0&-\eta&1
\end{array}\right)\left(\begin{array}{c}
e_1\\
e_2\\
e_3 \notag \\
\end{array}\right).
\end{align}
\end{thm}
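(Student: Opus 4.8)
The plan is to run the derivation test exactly as in the proofs of Theorems 2.8, 2.10 and 2.12. From (2.62) the candidate operator is $De_1=p\,e_1$, $De_2=q\,e_2-b_2e_3$, $De_3=s\,e_2+t\,e_3$, with $p=(b_3-2b_1)(2\eta-\beta)-2-c$, $q=(b_3+2b_2)(2\eta-\beta)-1-c$, $s=b_3+b_2-\beta$, $t=2b_1b_2-c$ and the $b_i$ as in (2.55). Imposing the derivation identity (2.16), $D[e_i,e_j]=[De_i,e_j]+[e_i,De_j]$, on the three brackets of (2.54): since $D$ maps $\mathbb{R}e_1$ to itself and $\mathrm{span}\{e_2,e_3\}$ to itself, only four of the nine scalar identities are nontrivial --- the $e_2$- and $e_3$-components of the $[e_1,e_2]$-identity, the $e_2$-component of the $[e_1,e_3]$-identity (its $e_3$-component being a consequence of the first), and the $e_1$-component of the $[e_2,e_3]$-identity, which is just $\alpha(p-q-t)=0$. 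Writing these four out with the aid of $\eta^2=1$ and (2.55) produces the polynomial system in $\alpha,\beta,\eta,c$ (the analogue of (2.53)) that the proof then solves.

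Next I would split on $\alpha$. If $\alpha=0$ the last identity is vacuous; the $e_2$-component of the $[e_1,e_2]$-identity gives $c=4\beta\eta-\beta^2-4$, and substituting this into the $e_2$-component of the $[e_1,e_3]$-identity collapses it to $\beta(\beta-\eta)^2=0$. Hence $\beta=0$ or $\beta=\eta$, and in either case the remaining ($e_3$-) identity holds automatically; feeding $\beta=0,\ c=-4$ and $\beta=\eta,\ c=-1$ back into (2.55)--(2.62) produces precisely the two matrices displayed in (i) and (ii), each of which one verifies directly is a derivation of $\mathfrak{g}_4$. This gives the sufficiency direction.

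The main obstacle is showing $\alpha\ne0$ is impossible. Then $\alpha(p-q-t)=0$ forces $p=q+t$; combining this with the $e_2$-component of the $[e_1,e_2]$-identity eliminates $c$ and, after completing the square, yields $(\alpha+3\eta-2\beta)^2=3-2\beta^2$, while the $e_2$-component of the $[e_1,e_3]$-identity rearranges to $\alpha\bigl(2(\beta-\eta)^2-1\bigr)=2\beta(\beta-\eta)^2$. From the second relation one reads off $\beta\ne\eta$ and $2(\beta-\eta)^2\ne1$, so $\alpha$ becomes an explicit rational function of $\beta$; substituting it into the square relation and into the still-unused $e_3$-component of the $[e_1,e_2]$-identity gives two polynomial equations in the single variable $\beta$ (for each choice $\eta=\pm1$) with no common real root, which is the contradiction sought --- the same style of elimination by which the possibility $\alpha=10\beta$ is excluded in the proof of Theorem 2.8. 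The delicate point is that none of the four identities may be discarded: the square relation together with $\alpha\bigl(2(\beta-\eta)^2-1\bigr)=2\beta(\beta-\eta)^2$ alone already admits real solutions with $\alpha\ne0$, and it is only the third identity that rules them out, so the elimination has to be pushed through completely, with the two signs of $\eta$ tracked separately. Once $\alpha\ne0$ is excluded, cases (i) and (ii) exhaust the possibilities, proving the necessity.
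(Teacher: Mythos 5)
Your reduction reproduces the paper's own setup exactly: the four scalar identities you isolate are precisely the four equations of (2.63) (indeed the $e_3$-component of the $[e_1,e_3]$-identity duplicates the $e_2$-component of the $[e_1,e_2]$-identity, and the $[e_2,e_3]$-identity is $\alpha(p-q-t)=0$), and your $\alpha=0$ analysis --- $c=4\beta\eta-\beta^2-4$, the collapse to $\beta(\beta-\eta)^2=0$, the remaining equation holding automatically, and the two displayed derivations --- is the paper's Case (i), down to (2.64). For $\alpha\neq0$ your two intermediate relations are correct and equivalent to the paper's (2.65)--(2.66): I checked that eliminating $c$ via the fourth equation turns the first equation into $(\alpha+3\eta-2\beta)^2=3-2\beta^2$ and the third into $\alpha\bigl(2(\beta-\eta)^2-1\bigr)=2\beta(\beta-\eta)^2$, which is (2.66) rewritten; your warning that the second equation cannot be discarded is also accurate (the other three equations alone do admit real solutions with $\alpha\neq0$).

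The one place you stop short is the decisive computation, and your statement of its outcome is not quite right. After substituting $\alpha(\beta)=2\beta(\beta-\eta)^2/\bigl(2(\beta-\eta)^2-1\bigr)$, the two resulting polynomials in $\beta$ do have a common real root, namely $\beta=\eta$ (the point where $\alpha(\beta)=0$, i.e.\ case (ii) of the theorem), so ``no common real root'' is false as written; the correct conclusion is ``no common real root with $\beta\neq\eta$'', invoking the exclusion $\beta\neq\eta$ you had already recorded from the $\alpha\neq0$ hypothesis. More importantly, the claim is only asserted, not carried out, and it is exactly here that the paper does the work: combining (2.66) with (2.68) yields the single quintic $\beta^5-6\eta\beta^4+15\beta^3-19\eta\beta^2+12\beta-3\eta=0$, which factors as $(\beta-\eta)^3(\beta^2-3\eta\beta+3)=0$, whose only real root is $\beta=\eta$, forcing $\alpha=0$ and the desired contradiction. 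Your variant (two one-variable polynomials, checked separately for $\eta=\pm1$) would close the case, but until that elimination or an equivalent factorization is actually exhibited, the necessity direction for $\alpha\neq0$ remains unproved in your write-up.
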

\begin{proof}
By (2.16) and (2.62), we get
\begin{align}
\left\{\begin{array}{l}
(2\eta-\beta)(-2b_1+b_2+2b_3-\beta)-c-2-b_2\beta=0,\\
(2\eta-\beta)[(2b_2+2b_3-2b_1)(2\eta-\beta)-3-c-2b_1b_2]-2b_2=0,\\
\beta[(2b_1+2b_2)(2\eta-\beta)+1+c-2b_1b_2]-2(b_3+b_2-\beta)=0,\\
\alpha[(2b_1+2b_2)(2\eta-\beta)+1-c+2b_1b_2]=0.\\
\end{array}\right.
\end{align}
\noindent Case (i) $\alpha=0$. Then $b_1=\eta-\beta$, $b_2=-\eta$, $b_3=\eta$. By (2.63), we get
\begin{align}
\left\{\begin{array}{l}
(2\eta-\beta)(\beta-\eta)-c-2+\eta\beta=0,\\
(2\eta-\beta)[-2(\eta-\beta)^2-3-c]+2\eta=0,\\
\beta[-2\beta(2\eta-\beta)+3+c+2(\eta-\beta)\eta]=0.\\
\end{array}\right.
\end{align}
By the first and third equations in (2.64), we can get $\beta(\beta-\eta)^2=0$, then $\beta=0$ or $\beta=\eta$.
When $\beta=0$, we get $c=-4$ by (2.64) which is (i) in Theorem 2.15. When $\beta=\eta$, we get $c=-1$ by (2.64) which is (ii) in Theorem 2.15.\\
\noindent Case (ii) $\alpha\neq 0$. By the fourth equation in (2.63), we get
\begin{equation}(2b_1+2b_2)(2\eta-\beta)+1=c-2b_1b_2.
\end{equation}
By the third equation in (2.63) and (2.65), we get
\begin{equation}\alpha=\beta+\frac{\beta}{1-2\beta(2\eta-\beta)}.
\end{equation}
${\rm The ~~second~~ equation ~~in~~ (2.63)}-2(2\eta-\beta)\times{\rm the ~~first ~~equation~~ in~~ (2.63)}$, then we get
\begin{equation}
(2\eta-\beta)[(2\eta-\beta)(4b_1+2b_2-2b_3+2\beta)+2+2b_2\beta]-2b_2=0.
\end{equation}
Then
\begin{equation}
\alpha[2(2\eta-\beta)^2+\beta(2\eta-\beta)-1]=2\beta(2\eta-\beta)^2+2\eta\beta(2\eta-\beta)-2(2\eta-\beta)-2\eta.
\end{equation}
By (2.66) and (2.68), we get
\begin{equation}
\beta^5-6\eta\beta^4+15\beta^3-19\eta\beta^2+12\beta-3\eta=0.
\end{equation}
When $\eta=1$, we get $(\beta-1)^3(\beta^2-3\beta+3)=0.$ Then $\beta=1$. By (2.66), $\alpha=0$. This is a contradiction. When $\eta=-1$, we get $(\beta+1)^3(\beta^2+3\beta+3)=0.$ Then $\beta=-1$. By (2.66), $\alpha=0$. This is a contradiction. So in this case, we have no solutions.
\end{proof}
By (2.12), (2.13) and (2.61), we get
\begin{align}
{\widetilde{{\rm Wan}}}^0\left(\begin{array}{c}
e_1\\
e_2\\
e_3
\end{array}\right)==\left(\begin{array}{ccc}
(b_3-2b_1)(2\eta-\beta)-2&0&0\\
0&(b_3+2b_2)(2\eta-\beta)-1&-\frac{b_3+2b_2-\beta}{2}\\
0&\frac{b_3+2b_2-\beta}{2}&2b_1b_2
\end{array}\right)\left(\begin{array}{c}
e_1\\
e_2\\
e_3
\end{array}\right).
\end{align}
\noindent If $(G_4,g,J)$ is the second kind algebraic Wanas soliton associated to the connection $\nabla^0$, then
$\widetilde{{\rm Wan}}^0=c{\rm Id}+D$, so
\begin{align}
\left\{\begin{array}{l}
De_1=[(b_3-2b_1)(2\eta-\beta)-2-c]e_1,\\
De_2=[(b_3+2b_2)(2\eta-\beta)-1-c]e_2-\frac{b_3+2b_2-\beta}{2}e_3,\\
De_3=\frac{b_3+2b_2-\beta}{2}e_2+(2b_1b_2-c)e_3.\\
\end{array}\right.
\end{align}

\noindent By (2.16) and (2.71), we get
\begin{align}
\left\{\begin{array}{l}
(2\eta-\beta)(-2b_1+b_2+\frac{3}{2}b_3-\frac{\beta}{2})-c-2-\frac{b_3+2b_2-\beta}{2}\beta=0,\\
(2\eta-\beta)[(2b_2+2b_3-2b_1)(2\eta-\beta)-3-c-2b_1b_2]-(b_3+2b_2-\beta)=0,\\
\beta[(2b_1+2b_2)(2\eta-\beta)+1+c-2b_1b_2]-(b_3+2b_2-\beta)=0,\\
\alpha[(2b_1+2b_2)(2\eta-\beta)+1-c+2b_1b_2]=0.\\
\end{array}\right.
\end{align}
Solving (2.72), similar to Theorem 2.15, we get

\vskip 0.5 true cm
\begin{thm}
$(G_4,g,J)$ is the second kind algebraic Wanas soliton associated to the connection $\nabla^0$ if and only if\\
 \noindent (i) $\alpha=0$, $\beta=\eta$, $c=-1$.
In particular,
\begin{align}
{D}\left(\begin{array}{c}
e_1\\
e_2\\
e_3
\end{array}\right)=\left(\begin{array}{ccc}
0&0&0\\
0&-1&\eta\\
0&-\eta&1
\end{array}\right)\left(\begin{array}{c}
e_1\\
e_2\\
e_3 \notag \\
\end{array}\right).
\end{align}
\noindent (ii) $\alpha=0$, $\beta=-\eta$, $c=-11$.
In particular,
\begin{align}
{D}\left(\begin{array}{c}
e_1\\
e_2\\
e_3
\end{array}\right)=\left(\begin{array}{ccc}
0&0&0\\
0&7&0\\
0&0&7
\end{array}\right)\left(\begin{array}{c}
e_1\\
e_2\\
e_3 \notag \\
\end{array}\right).
\end{align}
\end{thm}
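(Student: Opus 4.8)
The plan is to follow verbatim the strategy used to prove Theorem 2.15, with the single change that the operator ${\rm Wan}^0$ is replaced throughout by its symmetrization $\widetilde{{\rm Wan}}^0$ recorded in (2.70). First I would assume $(G_4,g,J)$ is a second kind algebraic Wanas soliton, so that $\widetilde{{\rm Wan}}^0=c\,{\rm Id}+D$; reading off the columns of the matrix in (2.70) immediately yields the explicit expression for $D$ displayed in (2.71). The only difference from the first kind case is that the $(2,3)$ and $(3,2)$ entries are now $\mp\tfrac{b_3+2b_2-\beta}{2}$ rather than $-b_2$ and $b_3+b_2-\beta$.

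Next I would impose that this $D$ be a derivation of $\mathfrak g$, i.e. test it against the three bracket relations in (2.55) exactly as in (2.16). Writing out $D[e_1,e_2]=[De_1,e_2]+[e_1,De_2]$, $D[e_1,e_3]=[De_1,e_3]+[e_1,De_3]$ and $D[e_2,e_3]=[De_2,e_3]+[e_2,De_3]$, and substituting $b_1=\tfrac{\alpha}{2}+\eta-\beta$, $b_2=\tfrac{\alpha}{2}-\eta$, $b_3=\tfrac{\alpha}{2}+\eta$, produces precisely the four scalar equations collected in (2.72).

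The heart of the argument is then the case analysis on $\alpha$. When $\alpha=0$ we have $b_1=\eta-\beta$, $b_2=-\eta$, $b_3=\eta$, and (2.72) collapses to a small (over-determined) system in $\beta$ and $c$ whose fourth equation is vacuous; combining the remaining equations should yield a polynomial identity in $\beta$ (of a shape analogous to $\beta(\beta-\eta)^2=0$ in Theorem 2.15) whose real solutions compatible with $\eta=\pm1$ are $\beta=\eta$ and $\beta=-\eta$, and back-substitution then pins down $c=-1$, respectively $c=-11$, together with the two derivation matrices in the statement. When $\alpha\neq 0$ I would use the fourth equation of (2.72) to eliminate $c$, substitute into the remaining three equations, clear denominators, and arrive at a single polynomial equation in $\beta$ with coefficients depending on $\eta$; I expect this to factor — as (2.69) did — with a repeated linear factor forcing $\beta=\eta$ for $\eta=1$ and $\beta=-\eta$ for $\eta=-1$, each of which in turn forces $\alpha=0$, a contradiction, so that no admissible $(\alpha,\beta,c)$ with $\alpha\neq0$ survives.

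The main obstacle I anticipate is exactly this $\alpha\neq0$ case: one must carry out the elimination cleanly and verify that the resulting degree-five (or similar) polynomial in $\beta$ factors over $\mathbb{R}[\eta]$ as a power of $(\beta-\eta)$ (resp. $(\beta+\eta)$) times an irreducible quadratic with no real roots. Everything else is bookkeeping — in particular, checking that the two derivations listed in (i) and (ii) genuinely satisfy all four equations of (2.72) is a direct substitution.
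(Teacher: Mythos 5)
Your proposal matches the paper's own treatment: the paper derives $\widetilde{{\rm Wan}}^0$ in (2.70), reads off $D$ in (2.71), imposes the derivation condition to obtain the system (2.72), and then simply states that solving (2.72) ``similar to Theorem 2.15'' yields the two families $(\alpha=0,\beta=\eta,c=-1)$ and $(\alpha=0,\beta=-\eta,c=-11)$ — exactly the route you describe, including the case split on $\alpha$ and the elimination of $c$ via the fourth equation when $\alpha\neq0$. Your anticipated factorizations are the right ones (for $\alpha=0$ the system reduces to $(\beta-\eta)^2(\beta+\eta)=0$ with $c=5\eta\beta-\beta^2-5$), so the plan is correct and no more schematic than the paper's own proof.
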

\vskip 0.5 true cm

\section{ Algebraic Wanas solitons associated to canonical connections on three-dimensional non-unimodular Lorentzian Lie groups}
\vskip 0.5 true cm
\noindent{\bf 3.1 Algebraic Wanas solitons of $G_5$}
\vskip 0.5 true cm
By (2.5) in \cite{BO}, we have for $G_5$, there exists a pseudo-orthonormal basis $\{e_1,e_2,e_3\}$ with $e_3$ timelike such that the Lie
algebra of $G_5$ satisfies
\begin{equation}
[e_1,e_2]=0,~~[e_1,e_3]=\alpha e_1+\beta e_2,~~[e_2,e_3]=\gamma e_1+\delta e_2,~~\alpha+\delta\neq 0,~~\alpha\gamma+\beta\delta=0.
\end{equation}
By Lemma 3.3 in \cite{Wa}, we have
\vskip 0.5 true cm
\begin{lem}
The canonical connection $\nabla^0$ of $(G_5,J)$ is given by
\begin{align}
&\nabla^0_{e_1}e_1=0,~~\nabla^0_{e_1}e_2=0,~~\nabla^0_{e_1}e_3=0,\\\notag
&\nabla^0_{e_2}e_1=0,~~\nabla^0_{e_2}e_2=0,~~\nabla^0_{e_2}e_3=0,\\\notag
&\nabla^0_{e_3}e_1=-\frac{\beta-\gamma}{2}e_2,~~\nabla^0_{e_3}e_2=\frac{\beta-\gamma}{2}e_1,~~\nabla^0_{e_3}e_3=0.
\notag
\end{align}
\end{lem}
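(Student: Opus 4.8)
The plan is to read $\nabla^0$ off from its definition (2.3), $\nabla^0_X Y=\nabla_X Y-\tfrac12(\nabla_X J)JY$, so that the only substantive input needed is the Levi-Civita connection $\nabla$ of $(G_5,g)$.

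First I would compute $\nabla$ from the Koszul formula, which for a left-invariant metric reads $2g(\nabla_{e_i}e_j,e_k)=g([e_i,e_j],e_k)-g([e_j,e_k],e_i)+g([e_k,e_i],e_j)$; feeding in the brackets (3.1) together with $g(e_1,e_1)=g(e_2,e_2)=1$ and $g(e_3,e_3)=-1$ gives
\[
\nabla_{e_1}e_1=\alpha e_3,\quad \nabla_{e_1}e_2=\tfrac{\beta+\gamma}{2}e_3,\quad \nabla_{e_1}e_3=\alpha e_1+\tfrac{\beta+\gamma}{2}e_2,
\]
\[
\nabla_{e_2}e_1=\tfrac{\beta+\gamma}{2}e_3,\quad \nabla_{e_2}e_2=\delta e_3,\quad \nabla_{e_2}e_3=\tfrac{\beta+\gamma}{2}e_1+\delta e_2,
\]
\[
\nabla_{e_3}e_1=\tfrac{\gamma-\beta}{2}e_2,\quad \nabla_{e_3}e_2=\tfrac{\beta-\gamma}{2}e_1,\quad \nabla_{e_3}e_3=0,
\]
which is the content of Lemma 3.3 in \cite{Wa}; alternatively one simply quotes that lemma.

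Next comes the observation that trivializes (2.3): $J$ acts as $+\mathrm{id}$ on $\mathcal D_+:=\mathrm{span}\{e_1,e_2\}$ and as $-\mathrm{id}$ on $\mathcal D_-:=\mathrm{span}\{e_3\}$. Writing $\nabla_{e_i}e_j=\sum_k c^k_{ij}e_k$ and setting $\varepsilon_1=\varepsilon_2=1$, $\varepsilon_3=-1$, one gets $(\nabla_{e_i}J)e_j=\nabla_{e_i}(Je_j)-J\nabla_{e_i}e_j=\sum_k c^k_{ij}(\varepsilon_j-\varepsilon_k)e_k$, whence $-\tfrac12(\nabla_{e_i}J)Je_j$ is exactly minus the component of $\nabla_{e_i}e_j$ lying in the $J$-eigenspace opposite to the one containing $e_j$. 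Therefore $\nabla^0_{e_i}e_j$ is the orthogonal projection of $\nabla_{e_i}e_j$ onto whichever of $\mathcal D_\pm$ contains $e_j$; in other words $\nabla^0$ is the block-diagonal part of $\nabla$ relative to the splitting $TG_5=\mathcal D_+\oplus\mathcal D_-$.

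Finally I apply this rule to the nine pairs. For $i\in\{1,2\}$ every entry of the table above lands in the "wrong" block, since $\nabla_{e_i}e_1,\nabla_{e_i}e_2\in\mathcal D_-$ while $e_1,e_2\in\mathcal D_+$, and $\nabla_{e_i}e_3\in\mathcal D_+$ while $e_3\in\mathcal D_-$; hence all six projections vanish, giving the first two rows of the assertion. For $i=3$ the vectors $\nabla_{e_3}e_1=\tfrac{\gamma-\beta}{2}e_2$ and $\nabla_{e_3}e_2=\tfrac{\beta-\gamma}{2}e_1$ already lie in $\mathcal D_+$ and survive unchanged — and $\tfrac{\gamma-\beta}{2}=-\tfrac{\beta-\gamma}{2}$ matches the stated coefficient — while $\nabla_{e_3}e_3=0$ stays $0$. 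This reproduces the table in the lemma exactly. The one point that requires genuine care is the sign bookkeeping in the Koszul computation of $\nabla$, because $e_3$ is timelike; everything after that is purely formal.
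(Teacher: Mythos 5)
Your proposal is correct: the Levi-Civita table you obtain from the Koszul formula for $G_5$ checks out (including the sign bookkeeping coming from $g(e_3,e_3)=-1$), and your observation that $-\tfrac12(\nabla_{e_i}J)Je_j=-\tfrac12\sum_k c^k_{ij}(1-\varepsilon_j\varepsilon_k)e_k$ kills exactly the components of $\nabla_{e_i}e_j$ in the $J$-eigenspace opposite to $e_j$, so that $\nabla^0$ is the block-diagonal part of $\nabla$, is valid and reproduces the nine stated values, e.g.\ $\nabla^0_{e_3}e_1=\tfrac{\gamma-\beta}{2}e_2=-\tfrac{\beta-\gamma}{2}e_2$. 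The paper itself offers no proof here: it simply quotes Lemma 3.3 of the cited reference, where the same two steps (compute $\nabla$ for the left-invariant Lorentzian metric, then apply $\nabla^0_XY=\nabla_XY-\tfrac12(\nabla_XJ)JY$) are carried out entry by entry. Your version is therefore the same computation in substance, but the projection reformulation is a genuine streamlining: it explains at a glance why all $\nabla^0_{e_i}e_j$ with $i\in\{1,2\}$ vanish (those covariant derivatives land entirely in the wrong $J$-block) instead of requiring nine separate evaluations of $(\nabla_{e_i}J)J$, and it works uniformly for all the groups $G_1,\dots,G_7$ considered in the paper.
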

\vskip 0.5 true cm
\noindent By (2.8) and Lemma 3.1, we have
\begin{align}
&T^0(e_1,e_2)=0,~~T^0(e_1,e_3)=-\alpha e_1-\frac{\beta+\gamma}{2}e_2,~~T^0(e_2,e_3)=-\frac{\beta+\gamma}{2} e_1-\delta e_2.
\end{align}
\noindent By (2.9) and (3.3), we have
\vskip 0.5 true cm
\begin{lem}
The tensor $A^0$ of the canonical connection $\nabla^0$ of $(G_5,J)$ is given by
\begin{align}
&A^0(e_1,e_2)e_1=0,~~A^0(e_1,e_2)e_2=0,~~A^0(e_1,e_2)e_3=0,\\\notag
&A^0(e_1,e_3)e_1=0
,~~A^0(e_1,e_3)e_2=0,~~
A^0(e_1,e_3)e_3=[\alpha^2+\frac{(\beta+\gamma)^2}{4}]e_1+\frac{\beta+\gamma}{2}(\alpha+\delta)e_2
,\\\notag
&A^0(e_2,e_3)e_1=0,
~~A^0(e_2,e_3)e_2=0,~~
A^0(e_2,e_3)e_3=\frac{\beta+\gamma}{2}(\alpha+\delta)e_1+[\frac{(\beta+\gamma)^2}{4}+\delta^2]e_2
.\notag
\end{align}
\end{lem}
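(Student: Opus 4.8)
The plan is to compute $A^0$ straight from its definition (2.9), namely $A^0(X,Y)Z = T^0(T^0(X,Y),Z)$, by feeding in the three torsion values already recorded in (3.3) and using that $T^0$ is bilinear and antisymmetric. The first observation is that $T^0(e_1,e_2) = 0$, so $A^0(e_1,e_2)Z = T^0(0,Z) = 0$ for every $Z$; this settles the whole first line of the lemma with no computation.

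Next I would note that the two nontrivial inner torsions, $T^0(e_1,e_3) = -\alpha e_1 - \frac{\beta+\gamma}{2}e_2$ and $T^0(e_2,e_3) = -\frac{\beta+\gamma}{2}e_1 - \delta e_2$, are linear combinations of $e_1$ and $e_2$ only. Consequently, for $Z \in \{e_1,e_2\}$ the outer torsion $T^0(\,\cdot\,,Z)$ is a combination of the $T^0(e_i,e_j)$ with $i,j \in \{1,2\}$, all of which vanish; hence $A^0(e_1,e_3)e_1$, $A^0(e_1,e_3)e_2$, $A^0(e_2,e_3)e_1$ and $A^0(e_2,e_3)e_2$ are all $0$. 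The only genuine calculations left are the two cases $Z = e_3$.

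For those, I would expand $A^0(e_1,e_3)e_3 = -\alpha\,T^0(e_1,e_3) - \frac{\beta+\gamma}{2}\,T^0(e_2,e_3)$ and $A^0(e_2,e_3)e_3 = -\frac{\beta+\gamma}{2}\,T^0(e_1,e_3) - \delta\,T^0(e_2,e_3)$, substitute (3.3) a second time, and collect the coefficients of $e_1$ and $e_2$; this produces exactly $[\alpha^2 + \frac{(\beta+\gamma)^2}{4}]e_1 + \frac{\beta+\gamma}{2}(\alpha+\delta)e_2$ and $\frac{\beta+\gamma}{2}(\alpha+\delta)e_1 + [\frac{(\beta+\gamma)^2}{4} + \delta^2]e_2$, as claimed. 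There is no real obstacle; the only points requiring care are the sign bookkeeping coming from the antisymmetry $T^0(e_j,e_i) = -T^0(e_i,e_j)$ when reducing the intermediate terms, and the fact that the structure constraints $\alpha+\delta \neq 0$ and $\alpha\gamma + \beta\delta = 0$ from (3.1) are not needed here — they enter only later, when the solitons are identified.
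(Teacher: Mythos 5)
Your proposal is correct and follows essentially the same route as the paper: Lemma 3.2 is obtained there directly from the definition $A^0(X,Y)Z=T^0(T^0(X,Y),Z)$ in (2.9) together with the torsion values (3.3), which is exactly your computation (your observation that $T^0(e_1,e_2)=0$ and that the nonzero torsions lie in ${\rm span}\{e_1,e_2\}$ just organizes the same bilinear expansion efficiently). The two nontrivial cases $Z=e_3$ expand precisely as you state and reproduce the coefficients in the lemma.
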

\vskip 0.5 true cm
By Lemma 3.2, (2.11) and (2.12), we have
\begin{align}
{\overline{A}}^0\left(\begin{array}{c}
e_1\\
e_2\\
e_3
\end{array}\right)=\left(\begin{array}{ccc}
0&0&0\\
0&0&0\\
0&0&-[\alpha^2+\frac{(\beta+\gamma)^2}{2}+\delta^2]
\end{array}\right)\left(\begin{array}{c}
e_1\\
e_2\\
e_3
\end{array}\right).
\end{align}
\noindent By (3.5) in \cite{Wa}, we have
\begin{align}
{\rm Ric}^0\left(\begin{array}{c}
e_1\\
e_2\\
e_3
\end{array}\right)=\left(\begin{array}{ccc}
0&0&0\\
0&0&0\\
0&0&0
\end{array}\right)\left(\begin{array}{c}
e_1\\
e_2\\
e_3
\end{array}\right).
\end{align}
\noindent By (2.14), we have
\begin{align}
{{\rm Wan}}^0\left(\begin{array}{c}
e_1\\
e_2\\
e_3
\end{array}\right)=\left(\begin{array}{ccc}
0&0&0\\
0&0&0\\
0&0&[\alpha^2+\frac{(\beta+\gamma)^2}{2}+\delta^2]
\end{array}\right)\left(\begin{array}{c}
e_1\\
e_2\\
e_3
\end{array}\right).
\end{align}
\noindent If $(G_5,g,J)$ is the first kind algebraic Wanas soliton associated to the connection $\nabla^0$, then
${\rm Wan}^0=c{\rm Id}+D$, so
\begin{align}
\left\{\begin{array}{l}
De_1=-ce_1,\\
De_2=-ce_2,\\
De_3=[\alpha^2+\frac{(\beta+\gamma)^2}{2}+\delta^2-c]e_3.\\
\end{array}\right.
\end{align}
By (2.16) and (3.8), we get
\begin{align}
\left\{\begin{array}{l}
\alpha[\alpha^2+\frac{(\beta+\gamma)^2}{2}+\delta^2-c]=0,\\
\beta[\alpha^2+\frac{(\beta+\gamma)^2}{2}+\delta^2-c]=0,\\
\gamma[\alpha^2+\frac{(\beta+\gamma)^2}{2}+\delta^2-c]=0,\\
\delta[\alpha^2+\frac{(\beta+\gamma)^2}{2}+\delta^2-c]=0,\\
\end{array}\right.
\end{align}
Solving (3.9), we get
\vskip 0.5 true cm
\begin{thm}
$(G_5,g,J)$ is the first kind algebraic Wanas soliton associated to the connection $\nabla^0$ if and only if $\alpha^2+\frac{(\beta+\gamma)^2}{2}+\delta^2=c$.
In particular,
\begin{align}
{D}\left(\begin{array}{c}
e_1\\
e_2\\
e_3
\end{array}\right)=\left(\begin{array}{ccc}
-[\alpha^2+\frac{(\beta+\gamma)^2}{2}+\delta^2]&0&0\\
0&-[\alpha^2+\frac{(\beta+\gamma)^2}{2}+\delta^2]&0\\
0&0&0
\end{array}\right)\left(\begin{array}{c}
e_1\\
e_2\\
e_3
\end{array}\right).
\end{align}
\end{thm}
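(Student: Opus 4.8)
The plan is to read off the shape of ${\rm Wan}^0$ from the matrix already computed in (3.7), impose the soliton equation, and then solve the resulting system (3.9) using the structural hypothesis on $\mathfrak{g}_5$. Write $\mu:=\alpha^2+\frac{(\beta+\gamma)^2}{2}+\delta^2$, so that by (2.14), (3.5) and (3.6) the operator ${\rm Wan}^0$ is diagonal with entries $0,0,\mu$ in the basis $\{e_1,e_2,e_3\}$. If $(G_5,g,J)$ is a first kind algebraic Wanas soliton, then ${\rm Wan}^0=c\,{\rm Id}+D$ forces $D$ to be the diagonal operator $De_1=-ce_1$, $De_2=-ce_2$, $De_3=(\mu-c)e_3$ recorded in (3.8).

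Next I would impose the derivation identity (2.16) on the three brackets in (3.1). The relation $[e_1,e_2]=0$ contributes nothing (both sides vanish), while $[e_1,e_3]=\alpha e_1+\beta e_2$ and $[e_2,e_3]=\gamma e_1+\delta e_2$ each reduce to a single scalar condition: $D$ scales the $e_3$-slot of $[e_i,e_3]$ by $\mu-c$ but the $e_1,e_2$-components by $-c$, so compatibility is equivalent to $(\mu-c)\alpha=(\mu-c)\beta=(\mu-c)\gamma=(\mu-c)\delta=0$, which is exactly the system (3.9). Since the defining constraints of $\mathfrak{g}_5$ in (3.1) include $\alpha+\delta\neq 0$, not both $\alpha$ and $\delta$ can vanish, and therefore the common factor must vanish: $\mu=c$, i.e. $c=\alpha^2+\frac{(\beta+\gamma)^2}{2}+\delta^2$. (The other constraint $\alpha\gamma+\beta\delta=0$ is not needed at this step.)

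For the converse, assume $c=\mu$ and set $D:={\rm Wan}^0-c\,{\rm Id}$, that is $De_1=-\mu e_1$, $De_2=-\mu e_2$, $De_3=0$; this is the matrix displayed in the statement. It remains to check that $D$ is a genuine derivation of $\mathfrak{g}_5$: for $(i,j)=(1,3)$ and $(2,3)$ both $D[e_i,e_j]$ and $[De_i,e_j]+[e_i,De_j]$ equal $-\mu[e_i,e_j]$, and the $[e_1,e_2]=0$ case is trivial, so (2.16) holds and $(G_5,g,J)$ is a first kind algebraic Wanas soliton with this $D$. There is essentially no technical obstacle here; the only point requiring care is invoking $\alpha+\delta\neq 0$ to collapse the four equations of (3.9) to the single condition $\mu=c$, and confirming that the operator produced in the converse really satisfies the derivation identity and not merely has the right diagonal shape.
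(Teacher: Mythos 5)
Your proposal is correct and follows essentially the same route as the paper: impose the derivation identity (2.16) on $D={\rm Wan}^0-c\,{\rm Id}$ to obtain the system (3.9), then use $\alpha+\delta\neq 0$ to force $\alpha^2+\frac{(\beta+\gamma)^2}{2}+\delta^2=c$. Your explicit verification of the converse (that $D={\rm diag}(-\mu,-\mu,0)$ is indeed a derivation) is a welcome detail the paper leaves implicit, but it is not a different method.
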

\vskip 0.5 true cm
\indent By (3.7), we have $\widetilde{{\rm Wan}}={\rm Wan}$ for $G_5$. So $(G_5,g,J)$ is the first kind algebraic Wanas soliton associated to the connection $\nabla^0$ if and only if $(G_5,g,J)$ is the second kind algebraic Wanas soliton associated to the connection $\nabla^0$.

\vskip 0.5 true cm
\noindent{\bf 3.2 Algebraic Wanas solitons of $G_6$}\\
\vskip 0.5 true cm
By (2.6) in \cite{BO}, we have for $G_6$, there exists a pseudo-orthonormal basis $\{e_1,e_2,e_3\}$ with $e_3$ timelike such that the Lie
algebra of $G_6$ satisfies
\begin{equation}
[e_1,e_2]=\alpha e_2+\beta e_3,~~[e_1,e_3]=\gamma e_2+\delta e_3,~~[e_2,e_3]=0,~~\alpha+\delta\neq 0£¬~~\alpha\gamma-\beta\delta=0.
\end{equation}
By Lemma 3.11 in \cite{Wa}, we have
\vskip 0.5 true cm
\begin{lem}
The canonical connection $\nabla^0$ of $(G_6,J)$ is given by
\begin{align}
&\nabla^0_{e_1}e_1=0,~~\nabla^0_{e_1}e_2=0,~~\nabla^0_{e_1}e_3=0,\\\notag
&\nabla^0_{e_2}e_1=-\alpha e_2,~~\nabla^0_{e_2}e_2=\alpha e_1,~~\nabla^0_{e_2}e_3=0,\\\notag
&\nabla^0_{e_3}e_1=\frac{\beta-\gamma}{2}e_2,~~\nabla^0_{e_3}e_2=-\frac{\beta-\gamma}{2}e_1,~~\nabla^0_{e_3}e_3=0.
\notag
\end{align}
\end{lem}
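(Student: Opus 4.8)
The plan is to read off the stated formulas directly from the defining relation (2.3), $\nabla^0_XY=\nabla_XY-\frac12(\nabla_XJ)JY$, so the first task is to write down the Levi-Civita connection $\nabla$ of $(G_6,g)$. Since the metric $g$ and the frame $\{e_1,e_2,e_3\}$ are left-invariant, the Koszul formula collapses to $2g(\nabla_{e_i}e_j,e_k)=g([e_i,e_j],e_k)-g([e_i,e_k],e_j)-g([e_j,e_k],e_i)$; substituting the brackets (3.13), together with $g(e_1,e_1)=g(e_2,e_2)=1$, $g(e_3,e_3)=-1$ and the non-unimodularity relation $\alpha\gamma-\beta\delta=0$, determines all nine covariant derivatives $\nabla_{e_i}e_j$. (This is exactly the content of the corresponding lemma of \cite{Wa}, which one could alternatively just cite.)

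Next I would simplify the correction term. Since $J^2=\mathrm{id}$, one has $(\nabla_XJ)JY=\nabla_XY-J(\nabla_X(JY))$, so (2.3) becomes $\nabla^0_XY=\frac12\nabla_XY+\frac12 J(\nabla_X(JY))$. Because $Je_1=e_1$, $Je_2=e_2$, $Je_3=-e_3$, this reads: for $j\in\{1,2\}$ the vector $\nabla^0_{e_i}e_j$ is the $\mathrm{span}\{e_1,e_2\}$-component of $\nabla_{e_i}e_j$, while $\nabla^0_{e_i}e_3$ is the $e_3$-component of $\nabla_{e_i}e_3$. The latter vanishes because $g(\nabla_{e_i}e_3,e_3)=\frac12 e_i\,g(e_3,e_3)=0$, which already yields the third column of the statement. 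Reading off the $e_1,e_2$-parts of the $\nabla_{e_i}e_j$ from the first step then gives the remaining rows; in particular only $\nabla_{e_2}$ and $\nabla_{e_3}$ contribute, producing the displayed $\pm\alpha$ and $\pm\frac{\beta-\gamma}{2}$ coefficients.

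The computation is essentially routine once these two reductions are in place; the only point requiring care is the sign bookkeeping caused by the timelike direction $e_3$ in the Koszul formula, and using the constraint $\alpha\gamma-\beta\delta=0$ when simplifying the mixed terms. I therefore expect no real obstacle beyond careful arithmetic. As a sanity check I would verify that the resulting $\nabla^0$ is still metric, i.e. $g(\nabla^0_{e_i}e_j,e_k)+g(e_j,\nabla^0_{e_i}e_k)=0$; this holds because $J$ is a $g$-isometry and $\nabla$ is metric, and it pins down the off-diagonal coefficients in the answer.
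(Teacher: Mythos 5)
Your proposal is correct, and it supplies more than the paper does: in the paper this lemma is not proved at all but simply quoted from Lemma 3.11 of \cite{Wa}, so your two-step derivation (Koszul formula for the left-invariant Levi-Civita connection, then the rewriting $\nabla^0_XY=\tfrac12\nabla_XY+\tfrac12 J\nabla_X(JY)$) is exactly the computation that underlies the citation. The eigenspace observation is the right way to organize it: for $Y\in\{e_1,e_2\}$ one takes the $\mathrm{span}\{e_1,e_2\}$-part of $\nabla_XY$, and $\nabla^0_Xe_3$ is the $e_3$-part of $\nabla_Xe_3$, which vanishes by metricity; with $\nabla_{e_2}e_1=-\alpha e_2+\tfrac{\gamma-\beta}{2}e_3$, $\nabla_{e_2}e_2=\alpha e_1$, $\nabla_{e_3}e_1=\tfrac{\beta-\gamma}{2}e_2-\delta e_3$, $\nabla_{e_3}e_2=\tfrac{\gamma-\beta}{2}e_1$ this reproduces the displayed table. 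Two small corrections: the structure constraint $\alpha\gamma-\beta\delta=0$ (which is a Jacobi-type condition on $\mathfrak{g}_6$, not the statement of non-unimodularity) is never actually used in this computation, so invoking it in the simplification is superfluous; and the brackets you need are those in (3.11) of this paper. Neither point affects the validity of your argument.
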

\noindent By (2.8) and Lemma 3.4, we have
\begin{align}
&T^0(e_1,e_2)=-\beta e_3,~~T^0(e_1,e_3)=-\frac{\beta+\gamma}{2}e_2-\delta e_3,~~T^0(e_2,e_3)=\frac{\beta-\gamma}{2} e_1.
\end{align}
\noindent By (2.9) and (3.13), we have
\vskip 0.5 true cm
\begin{lem}
The tensor $A^0$ of the canonical connection $\nabla^0$ of $(G_6,J)$ is given by
\begin{align}
&A^0(e_1,e_2)e_1=-\frac{\beta(\beta+\gamma)}{2}e_2-\beta\delta e_3,~~
A^0(e_1,e_2)e_2=\frac{\beta(\beta-\gamma)}{2}e_1
,~~A^0(e_1,e_2)e_3=0,\\\notag
&A^0(e_1,e_3)e_1=-\frac{\delta(\beta+\gamma)}{2}e_2-[\frac{\beta(\beta+\gamma)}{2}+\delta^2]e_3
,~~A^0(e_1,e_3)e_2=\frac{\delta(\beta-\gamma)}{2}e_1\\\notag
&A^0(e_1,e_3)e_3=\frac{\gamma^2-\beta^2}{4}e_1,
~~A^0(e_2,e_3)e_1=0,
~~A^0(e_2,e_3)e_2=-\frac{\beta(\beta-\gamma)}{2}e_3,\\\notag
&A^0(e_2,e_3)e_3=\frac{\gamma^2-\beta^2}{4}e_2-\frac{\delta(\beta-\gamma)}{2}e_3
.\notag
\end{align}
\end{lem}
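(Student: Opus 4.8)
The plan is to obtain all nine values $A^0(e_i,e_j)e_k$ (with $i<j$) by direct substitution into the defining identity $A^0(X,Y)Z=T^0(T^0(X,Y),Z)$ from (2.9), feeding in the torsion values already computed in (3.13). First I would tabulate $T^0$ on every ordered pair of basis vectors: from (3.13) this means recording $T^0(e_1,e_2)=-\beta e_3$, $T^0(e_1,e_3)=-\frac{\beta+\gamma}{2}e_2-\delta e_3$, $T^0(e_2,e_3)=\frac{\beta-\gamma}{2}e_1$, together with their sign-reversed counterparts $T^0(e_j,e_i)=-T^0(e_i,e_j)$ and the vanishing diagonal $T^0(e_i,e_i)=0$.

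Next, for each pair $(e_i,e_j)$ with $i<j$ I would plug the known value of $T^0(e_i,e_j)$ into the outer argument and expand by bilinearity of $T^0$. For $(e_1,e_2)$ this reduces to $A^0(e_1,e_2)e_k=-\beta\,T^0(e_3,e_k)$; for $(e_1,e_3)$ to $A^0(e_1,e_3)e_k=-\frac{\beta+\gamma}{2}\,T^0(e_2,e_k)-\delta\,T^0(e_3,e_k)$; and for $(e_2,e_3)$ to $A^0(e_2,e_3)e_k=\frac{\beta-\gamma}{2}\,T^0(e_1,e_k)$. Evaluating each of these at $k=1,2,3$ and reading off the coefficients from the table built in the first step produces the three displayed lines of the lemma.

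I do not anticipate a genuine obstacle here: the whole computation is bilinear bookkeeping. The only places to be careful are the sign flips coming from the antisymmetry of $T^0$ (so that, for instance, $T^0(e_3,e_1)=+\frac{\beta+\gamma}{2}e_2+\delta e_3$ enters the $A^0(e_1,e_2)$ line with the opposite sign to $T^0(e_1,e_3)$) and the collapse of the products $(\beta+\gamma)(\beta-\gamma)$ to $\beta^2-\gamma^2$ in the two terms $A^0(e_1,e_3)e_3$ and $A^0(e_2,e_3)e_3$; once these are handled correctly, assembling the nine evaluations gives precisely the stated formulas.
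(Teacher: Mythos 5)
Your proposal is correct and is exactly the paper's route: the lemma is obtained by substituting the torsion values of (3.13) into $A^0(X,Y)Z=T^0(T^0(X,Y),Z)$ from (2.9) and expanding by bilinearity and antisymmetry of $T^0$, and the nine evaluations you describe reproduce all the stated coefficients.
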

\vskip 0.5 true cm
By Lemma 3.5, (2.11) and (2.12), we have
\begin{align}
{\overline{A}}^0\left(\begin{array}{c}
e_1\\
e_2\\
e_3
\end{array}\right)=\left(\begin{array}{ccc}
\beta^2+\beta\gamma+\delta^2&0&0\\
0&\beta(\beta-\gamma)&\frac{\delta(\gamma-\beta)}{2}\\
0&\frac{\delta(\beta-\gamma)}{2}&\frac{\beta^2-\gamma^2}{2}
\end{array}\right)\left(\begin{array}{c}
e_1\\
e_2\\
e_3
\end{array}\right).
\end{align}
\noindent By (3.15) in \cite{Wa}, we have
\begin{align}
{\rm Ric}^0\left(\begin{array}{c}
e_1\\
e_2\\
e_3
\end{array}\right)=\left(\begin{array}{ccc}
\frac{1}{2}\beta(\beta-\gamma)-\alpha^2&0&0\\
0&\frac{1}{2}\beta(\beta-\gamma)-\alpha^2&0\\
0&-\gamma\alpha+\frac{1}{2}\delta(\beta-\gamma)&0
\end{array}\right)\left(\begin{array}{c}
e_1\\
e_2\\
e_3
\end{array}\right).
\end{align}
\noindent By (2.14), we have
\begin{align}
{{\rm Wan}}^0\left(\begin{array}{c}
e_1\\
e_2\\
e_3
\end{array}\right)=\left(\begin{array}{ccc}
-(\alpha^2+\frac{\beta^2}{2}+\delta^2+\frac{3}{2}\beta\gamma)&0&0\\
0&-[\frac{1}{2}\beta(\beta-\gamma)+\alpha^2]&\frac{\delta(\beta-\gamma)}{2}\\
0&-\gamma\alpha&\frac{\gamma^2-\beta^2}{2}
\end{array}\right)\left(\begin{array}{c}
e_1\\
e_2\\
e_3
\end{array}\right).
\end{align}
\noindent If $(G_6,g,J)$ is the first kind algebraic Wanas soliton associated to the connection $\nabla^0$, then
${\rm Wan}^0=c{\rm Id}+D$, so
\begin{align}
\left\{\begin{array}{l}
De_1=-(\alpha^2+\frac{\beta^2}{2}+\delta^2+\frac{3}{2}\beta\gamma+c)e_1,\\
De_2=-[\frac{1}{2}\beta(\beta-\gamma)+\alpha^2+c]e_2+\frac{\delta(\beta-\gamma)}{2} e_3,\\
De_3=-\gamma\alpha e_2+(\frac{\gamma^2-\beta^2}{2}-c)e_3.\\
\end{array}\right.
\end{align}
\noindent Then we have
\vskip 0.5 true cm
\begin{thm}
$(G_6,g,J)$ is the first kind algebraic Wanas soliton associated to the connection $\nabla^0$ if and only if\\
 \noindent (i)$\alpha\neq 0$. $\beta=\gamma=\delta=0$, $\alpha^2+c=0$.
In particular,
\begin{align}
{D}\left(\begin{array}{c}
e_1\\
e_2\\
e_3
\end{array}\right)=\left(\begin{array}{ccc}
0&0&0\\
0&0&0\\
0&0&\alpha^2
\end{array}\right)\left(\begin{array}{c}
e_1\\
e_2\\
e_3 \notag \\
\end{array}\right).
\end{align}
\noindent (ii)$\beta=\gamma=0$, $\alpha^2+\delta^2+c=0$, $\delta\neq 0$, $\alpha+\delta\neq 0$.
In particular,
\begin{align}
{D}\left(\begin{array}{c}
e_1\\
e_2\\
e_3
\end{array}\right)=\left(\begin{array}{ccc}
0&0&0\\
0&\delta^2&0\\
0&0&\alpha^2+\delta^2
\end{array}\right)\left(\begin{array}{c}
e_1\\
e_2\\
e_3 \notag \\
\end{array}\right).
\end{align}
\noindent (iii) $\beta\neq 0$, $\delta\neq 0$, $\gamma=-\beta$, $\delta=-\alpha$, $\alpha^2=\beta^2$, $\alpha^2+c=0$.
In particular,
\begin{align}
{D}\left(\begin{array}{c}
e_1\\
e_2\\
e_3
\end{array}\right)=\left(\begin{array}{ccc}
0&0&0\\
0&-\beta^2&-\alpha\beta\\
0&\alpha\beta&\alpha^2
\end{array}\right)\left(\begin{array}{c}
e_1\\
e_2\\
e_3 \notag \\
\end{array}\right).
\end{align}
\end{thm}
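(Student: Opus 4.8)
The plan is to substitute the expressions for $De_1,De_2,De_3$ recorded in (3.19) into the derivation identity (2.16), tested on the three structural brackets of $\mathfrak{g}_6$ in (3.12). Since $[e_2,e_3]=0$ and $[e_2,e_2]=[e_3,e_3]=0$, the identity $D[e_2,e_3]=[De_2,e_3]+[e_2,De_3]$ is automatically satisfied, so all the information comes from the relations $D(\alpha e_2+\beta e_3)=[De_1,e_2]+[e_1,De_2]$ and $D(\gamma e_2+\delta e_3)=[De_1,e_3]+[e_1,De_3]$. Expanding both sides using $[e_1,e_2]=\alpha e_2+\beta e_3$ and $[e_1,e_3]=\gamma e_2+\delta e_3$ and collecting the $e_2$- and $e_3$-coefficients produces a system of four polynomial equations in $\alpha,\beta,\gamma,\delta,c$, to which I append the non-unimodularity constraints $\alpha+\delta\neq0$ and $\alpha\gamma-\beta\delta=0$. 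This is the exact analogue of the systems (2.53) and (3.9) in the earlier theorems, and the remaining work is to solve it.

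For the case analysis I would first split on whether $\beta=\gamma=0$. In that subcase the constraint $\alpha\gamma-\beta\delta=0$ holds automatically and, after the off-diagonal entries $\tfrac{\delta(\beta-\gamma)}{2}$ and $-\gamma\alpha$ of (3.18) drop out, the four equations collapse to $\alpha(\alpha^2+\delta^2+c)=0$ and $\delta(\alpha^2+\delta^2+c)=0$; since $\alpha+\delta\neq0$ this forces $\alpha^2+\delta^2+c=0$, and distinguishing $\delta=0$ (so $\alpha\neq0$) from $\delta\neq0$ yields cases (i) and (ii), with $D$ read off by plugging the parameter values back into (3.19). If instead $\beta\neq0$ or $\gamma\neq0$, the requirement that the off-diagonal part of ${\rm Wan}^0$ be compatible with a derivation, combined with $\alpha\gamma=\beta\delta$, pins down the remaining parameters; carrying out this elimination gives the relations of case (iii), and again substitution into (3.19) produces the stated $D$. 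In every case the converse implication — that the displayed $D$ really does satisfy (2.16) — is a one-line verification once the parameter relations are imposed.

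The main obstacle is the branch $\beta\neq0$ or $\gamma\neq0$: the four equations are genuinely nonlinear, so a naive elimination becomes unpleasant, and the trick is to use $\alpha\gamma-\beta\delta=0$ early to reduce the number of free parameters before substituting into the rest of the system. One must also be careful to discard spurious branches — intermediate factorizations will offer solutions such as $\alpha=\delta=0$ or $\beta=\gamma=0$ with $\alpha=0$, which are excluded by $\alpha+\delta\neq0$ and by the standing case hypothesis respectively — exactly as in the proof of Theorem 2.8, where a candidate relation $\alpha=10\beta$ is eliminated by a further consistency check.
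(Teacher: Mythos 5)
Your setup is the same as the paper's: the Wanas operator here maps $e_1$ to a multiple of $e_1$ and $e_2,e_3$ into ${\rm span}(e_2,e_3)$, so the derivation condition on $[e_2,e_3]=0$ is vacuous and testing $D$ on $[e_1,e_2]$ and $[e_1,e_3]$ gives exactly the paper's four polynomial equations (its (3.19)); your treatment of the subcase $\beta=\gamma=0$, where the system collapses to $\alpha(\alpha^2+\delta^2+c)=0$ and $\delta(\alpha^2+\delta^2+c)=0$ and $\alpha+\delta\neq0$ forces $\alpha^2+\delta^2+c=0$, is correct and reproduces cases (i) and (ii). (Minor point: your equation references are shifted by one — the $De_i$ are in (3.18), the brackets in (3.11), the Wanas matrix in (3.17).)

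The genuine gap is the branch $(\beta,\gamma)\neq(0,0)$, which is the actual content of the classification and which you only assert ("carrying out this elimination gives the relations of case (iii)"). Saying that the off-diagonal part of ${\rm Wan}^0$ must be "compatible with a derivation" is not an argument; one has to extract concrete consequences from the nonlinear system. The paper does this by adding the two equations of the form $\alpha E-F=0$ and $\delta E+F=0$ to get $(\alpha+\delta)E=0$, hence
\begin{equation*}
\alpha^2+\tfrac{\beta^2}{2}+\delta^2+\tfrac{3}{2}\beta\gamma+c=0,
\qquad
\alpha\beta\gamma+\tfrac{1}{2}\gamma\delta(\beta-\gamma)=0,
\end{equation*}
and then, using $\alpha\gamma=\beta\delta$, the key factorization $\delta(2\beta-\gamma)(\beta+\gamma)=0$. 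From this one must run the case analysis: $\delta=0$ forces $\gamma=0$ and leads back to case (i); $\delta\neq0,\ \beta=0$ forces $\gamma=0$ (so no new solutions with $\gamma\neq0,\beta=0$); $\delta\neq0,\ \beta\neq0,\ \gamma=2\beta$ forces $\delta=2\alpha$ and then $\beta(2\alpha^2+\beta^2)=0$, a contradiction; and only $\gamma=-\beta$, $\delta=-\alpha$ survives, yielding $\alpha^2=\beta^2$ and $\alpha^2+c=0$, i.e.\ case (iii). None of these eliminations appear in your proposal, so you have not shown that the branch $(\beta,\gamma)\neq(0,0)$ produces only case (iii), nor excluded the spurious subcases ($\beta=0,\gamma\neq0$; $\gamma=2\beta$) that the factorization is needed to kill. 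Until this elimination is actually carried out, the "only if" direction of the theorem is unproved.
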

\begin{proof}
By (2.16) and (3.18), we get
\begin{align}
\left\{\begin{array}{l}
\alpha(\alpha^2+\frac{\beta^2}{2}+\delta^2+\frac{3}{2}\beta\gamma+c)-\alpha\beta\gamma-\frac{1}{2}\gamma\delta(\beta-\gamma)=0,\\
\beta(2\alpha^2+\frac{\beta^2}{2}+\frac{\gamma^2}{2}+\delta^2+\beta\gamma+c)+\frac{1}{2}\delta(\beta-\gamma)(\alpha-\delta)=0,\\
\gamma(\frac{\beta^2}{2}-\frac{\gamma^2}{2}+\delta^2+2\beta\gamma+c)+(\alpha-\delta)\gamma\alpha=0,\\
\delta(\alpha^2+\frac{\beta^2}{2}+\delta^2+\frac{3}{2}\beta\gamma+c)+\alpha\beta\gamma+\frac{1}{2}\gamma\delta(\beta-\gamma)=0.\\
\end{array}\right.
\end{align}
Using the first equation and the fourth equation in (3.19) and $\alpha+\delta\neq 0$, we have
\begin{align}
\left\{\begin{array}{l}
\alpha^2+\frac{\beta^2}{2}+\delta^2+\frac{3}{2}\beta\gamma+c=0,\\
\alpha\beta\gamma+\frac{1}{2}\gamma\delta(\beta-\gamma)=0,\\
\end{array}\right.
\end{align}
By the second equation in (3.20) and $\alpha\gamma-\beta\delta=0$, we get
\begin{equation}
\delta(2\beta-\gamma)(\beta+\gamma)=0.
\end{equation}
\noindent Case 1) $\delta=0$. By $\alpha\gamma-\beta\delta=0$ and $\alpha+\delta\neq 0$, we get $\alpha\neq 0$ and $\gamma=0$.
By the first equation in (3.20) and  the second equation in (3.19), we get $\alpha^2+\frac{\beta^2}{2}+c=0$ and $\beta(2\alpha^2+\frac{\beta^2}{2}+c)=0$.
So $\beta=0$ and $\alpha^2+c=0$. This is the case (i) in Theorem 3.6.\\
\noindent Case 2)-a) $\delta\neq 0$ and $\beta=0$. By (3.21), then $\gamma=0$. By the first equation in (3.20), we have
$\alpha^2+\delta^2+c=0$. This is the case (ii) in Theorem 3.6.\\
\noindent Case 2)-b) $\delta\neq 0$ and $\beta\neq 0$. By (3.21), then $\gamma=2\beta$ or $\gamma=-\beta$.\\
\noindent Case 2)-b)-i) $\delta\neq 0$ and $\beta\neq 0$ and $\gamma=2\beta$. By $\alpha\gamma-\beta\delta=0$, then $\delta=2\alpha$.
By the first equation in (3.20) and  the second equation in (3.19), we get
\begin{equation}
\beta(\alpha^2+\frac{\gamma^2}{2}-\frac{1}{2}\beta\gamma)+\frac{1}{2}\delta(\beta-\gamma)(\alpha-\delta)=0.
\end{equation}
By $\gamma=2\beta$ and $\delta=2\alpha$ and (3.22), we get $\beta(2\alpha^2+\beta^2)=0$, then $\alpha=\beta=0$. This is a contradiction. So in this case
we have no solutions.\\
\noindent Case 2)-b)-ii) $\delta\neq 0$ and $\beta\neq 0$ and $\gamma=-\beta$. By $\alpha\gamma-\beta\delta=0$, then $\delta=-\alpha$.
By (3.19) and (3.20), we get $\alpha^2=\beta^2$ and $\alpha^2+c=0$. This is the case (iii) in Theorem 3.6.\\
\end{proof}
By (2.12), (2.13) and (3.17), we get
\begin{align}
{\widetilde{{\rm Wan}}}^0\left(\begin{array}{c}
e_1\\
e_2\\
e_3
\end{array}\right)=\left(\begin{array}{ccc}
-(\alpha^2+\frac{\beta^2}{2}+\delta^2+\frac{3}{2}\beta\gamma)&0&0\\
0&-[\frac{1}{2}\beta(\beta-\gamma)+\alpha^2]&\frac{\gamma\alpha}{2}+\frac{\delta(\beta-\gamma)}{4}\\
0&-[\frac{\gamma\alpha}{2}+\frac{\delta(\beta-\gamma)}{4}]&\frac{\gamma^2-\beta^2}{2}
\end{array}\right)\left(\begin{array}{c}
e_1\\
e_2\\
e_3
\end{array}\right).
\end{align}
\noindent If $(G_6,g,J)$ is the second kind algebraic Wanas soliton associated to the connection $\nabla^0$, then
$\widetilde{{\rm Wan}}^0=c{\rm Id}+D$, so
\begin{align}
\left\{\begin{array}{l}
De_1=-(\alpha^2+\frac{\beta^2}{2}+\delta^2+\frac{3}{2}\beta\gamma+c)e_1,\\
De_2=-[\frac{1}{2}\beta(\beta-\gamma)+\alpha^2+c]e_2+[\frac{\gamma\alpha}{2}+\frac{\delta(\beta-\gamma)}{4}]e_3,\\
De_3=-[\frac{\gamma\alpha}{2}+\frac{\delta(\beta-\gamma)}{4}]e_2+(\frac{\gamma^2-\beta^2}{2}-c)e_3.\\
\end{array}\right.
\end{align}
\noindent By (2.16) and (3.24), we get
\begin{align}
\left\{\begin{array}{l}
\alpha(\alpha^2+\frac{\beta^2}{2}+\delta^2+\frac{3}{2}\beta\gamma+c)-(\beta+\gamma)[\frac{\gamma\alpha}{2}+\frac{\delta(\beta-\gamma)}{4}]=0,\\
\beta(2\alpha^2+\frac{\beta^2}{2}+\frac{\gamma^2}{2}+\delta^2+\beta\gamma+c)+(\alpha-\delta)[\frac{\gamma\alpha}{2}+\frac{\delta(\beta-\gamma)}{4}]=0,\\
\gamma(\frac{\beta^2}{2}-\frac{\gamma^2}{2}+\delta^2+2\beta\gamma+c)+(\alpha-\delta)[\frac{\gamma\alpha}{2}+\frac{\delta(\beta-\gamma)}{4}]=0,\\
\delta(\alpha^2+\frac{\beta^2}{2}+\delta^2+\frac{3}{2}\beta\gamma+c)+(\beta+\gamma)[\frac{\gamma\alpha}{2}+\frac{\delta(\beta-\gamma)}{4}]=0.\\
\end{array}\right.
\end{align}
Solving (3.25) and  similar to Theorem 3.6, we have
\vskip 0.5 true cm
\begin{thm}
$(G_6,g,J)$ is the second kind algebraic Wanas soliton associated to the connection $\nabla^0$ if and only if\\
 \noindent (i)$\alpha\neq 0$. $\beta=\gamma=\delta=0$, $\alpha^2+c=0$.
In particular,
\begin{align}
{D}\left(\begin{array}{c}
e_1\\
e_2\\
e_3
\end{array}\right)=\left(\begin{array}{ccc}
0&0&0\\
0&0&0\\
0&0&\alpha^2
\end{array}\right)\left(\begin{array}{c}
e_1\\
e_2\\
e_3 \notag \\
\end{array}\right).
\end{align}
\noindent (ii)$\beta=\gamma=0$, $\alpha^2+\delta^2+c=0$, $\delta\neq 0$, $\alpha+\delta\neq 0$.
In particular,
\begin{align}
{D}\left(\begin{array}{c}
e_1\\
e_2\\
e_3
\end{array}\right)=\left(\begin{array}{ccc}
0&0&0\\
0&\delta^2&0\\
0&0&\alpha^2+\delta^2
\end{array}\right)\left(\begin{array}{c}
e_1\\
e_2\\
e_3 \notag \\
\end{array}\right).
\end{align}
\noindent (iii) $\beta\neq 0$, $\delta\neq 0$, $\gamma=-\beta$, $\delta=-\alpha$, $\alpha^2=\beta^2$, $\alpha^2+c=0$.
In particular,
\begin{align}
{D}\left(\begin{array}{c}
e_1\\
e_2\\
e_3
\end{array}\right)=\left(\begin{array}{ccc}
0&0&0\\
0&-\alpha^2&-\alpha\beta\\
0&\alpha\beta&\alpha^2
\end{array}\right)\left(\begin{array}{c}
e_1\\
e_2\\
e_3 \notag \\
\end{array}\right).
\end{align}
\end{thm}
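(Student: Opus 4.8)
The plan is to run the argument of Theorem 3.6 almost verbatim. The matrix $\widetilde{{\rm Wan}}^0$ in (3.23) differs from ${\rm Wan}^0$ in (3.17) only in the $(2,3)$ and $(3,2)$ entries, where the asymmetric quantities $\tfrac{\delta(\beta-\gamma)}{2}$ and $-\gamma\alpha$ are replaced, up to sign, by their symmetrization $\tfrac{\gamma\alpha}{2}+\tfrac{\delta(\beta-\gamma)}{4}$; the diagonal entries are unchanged. Consequently the candidate derivation $D$ of (3.24) has the same block shape as in the first-kind case: $De_1\in\mathbb{R}e_1$ and $D$ stabilises $\mathrm{span}\{e_2,e_3\}$, so that the bracket $[e_2,e_3]=0$ imposes nothing and the derivation condition (2.16) for the brackets $[e_1,e_2]$, $[e_1,e_3]$ of (3.11) collapses to exactly the four scalar equations (3.25).

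So I would start from (3.25). Adding its first and fourth equations, the off-diagonal contributions $(\beta+\gamma)\big(\tfrac{\gamma\alpha}{2}+\tfrac{\delta(\beta-\gamma)}{4}\big)$ cancel, leaving $(\alpha+\delta)\big(\alpha^2+\tfrac{\beta^2}{2}+\delta^2+\tfrac32\beta\gamma+c\big)=0$; since $\alpha+\delta\neq 0$, this gives
\[
\alpha^2+\tfrac{\beta^2}{2}+\delta^2+\tfrac32\beta\gamma+c=0,\qquad (\beta+\gamma)\Big(\tfrac{\gamma\alpha}{2}+\tfrac{\delta(\beta-\gamma)}{4}\Big)=0.
\]
Feeding the constraint $\alpha\gamma-\beta\delta=0$ into the second relation converts it into $\delta(\beta+\gamma)(3\beta-\gamma)=0$, which plays the role of (3.21) in the present setting.

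Then I would do the same case split. If $\delta=0$, then $\alpha\gamma-\beta\delta=0$ and $\alpha+\delta\neq0$ force $\alpha\neq0$ and $\gamma=0$, and subtracting the first relation above from the second equation of (3.25) gives $\alpha^2\beta=0$, hence $\beta=0$ and $\alpha^2+c=0$ --- case (i). If $\delta\neq0$ and $\beta=0$, the factored relation gives $\gamma=0$ and the first relation gives $\alpha^2+\delta^2+c=0$ --- case (ii). If $\delta\neq0$ and $\beta\neq0$, then $(\beta+\gamma)(3\beta-\gamma)=0$ splits into $\gamma=3\beta$ or $\gamma=-\beta$; in the branch $\gamma=3\beta$ one gets $\delta=3\alpha$ from $\alpha\gamma-\beta\delta=0$, and substituting into the second equation of (3.25) should force $\alpha=\beta=0$, a contradiction, whereas the branch $\gamma=-\beta$ together with the surviving equations pins down $\delta=-\alpha$, $\alpha^2=\beta^2$, $\alpha^2+c=0$ --- case (iii). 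In each case the explicit form of $D$ in the statement is obtained by substituting the solution back into (3.24).

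The main obstacle is the polynomial bookkeeping in the last sub-case. Because the off-diagonal entries here are the symmetrized ones, the equation forcing $\alpha=\beta=0$ is not literally the one used in the proof of Theorem 3.6, so I would recompute and factor it from scratch and check that exactly the $\gamma=-\beta$ branch survives with the claimed constraints. The rest is a direct transcription of the first-kind argument, together with the verification that the displayed matrices for $D$ are indeed derivations.
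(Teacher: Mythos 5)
Your proposal takes essentially the same route as the paper, whose proof of this theorem is literally ``solve (3.25) as in Theorem 3.6,'' and your reductions are correct: adding the first and fourth equations of (3.25) and using $\alpha+\delta\neq 0$ gives the diagonal condition together with $(\beta+\gamma)\bigl[\tfrac{\gamma\alpha}{2}+\tfrac{\delta(\beta-\gamma)}{4}\bigr]=0$, which via $\alpha\gamma=\beta\delta$ becomes $\delta(\beta+\gamma)(3\beta-\gamma)=0$, and the ensuing case split reproduces (i)--(iii). The one computation you left open also checks out: in the branch $\gamma=3\beta$, $\delta=3\alpha$ the off-diagonal term vanishes and the second equation of (3.25) reduces to $\beta(\alpha^2+3\beta^2)=0$, yielding the contradiction you anticipated.
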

\vskip 0.5 true cm
\noindent{\bf 3.3 Algebraic Wanas solitons of $G_7$}\\
\vskip 0.5 true cm
By (2.7) in \cite{BO}, we have for $G_7$, there exists a pseudo-orthonormal basis $\{e_1,e_2,e_3\}$ with $e_3$ timelike such that the Lie
algebra of $G_7$ satisfies
\begin{equation}
[e_1,e_2]=-\alpha e_1-\beta e_2-\beta e_3,~~[e_1,e_3]=\alpha e_1+\beta e_2+\beta e_3,~~[e_2,e_3]=\gamma e_1+\delta e_2+\delta e_3,,~~\alpha+\delta\neq 0,~~\alpha\gamma=0.
\end{equation}
By Lemma 3.20 in \cite{Wa}, we have
\vskip 0.5 true cm
\begin{lem}
The canonical connection $\nabla^0$ of $(G_7,J)$ is given by
\begin{align}
&\nabla^0_{e_1}e_1=\alpha e_2,~~\nabla^0_{e_1}e_2=-\alpha e_1,~~\nabla^0_{e_1}e_3=0,\\\notag
&\nabla^0_{e_2}e_1=\beta e_2,~~\nabla^0_{e_2}e_2=-\beta e_1,~~\nabla^0_{e_2}e_3=0,\\\notag
&\nabla^0_{e_3}e_1=-(\beta-\frac{\gamma}{2})e_2,~~\nabla^0_{e_3}e_2=(\beta-\frac{\gamma}{2})e_1,~~\nabla^0_{e_3}e_3=0.
\notag
\end{align}
\end{lem}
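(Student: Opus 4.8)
The plan is to reduce everything to the Levi-Civita connection $\nabla$ of $(G_7,g)$ via the defining identity (2.3). First I would compute $\nabla$ directly from the brackets (3.28): since $\{e_1,e_2,e_3\}$ is a left-invariant pseudo-orthonormal frame with $e_3$ timelike, all the derivative terms in the Koszul formula drop out and
\begin{equation}
2g(\nabla_{e_i}e_j,e_k)=g([e_i,e_j],e_k)-g([e_i,e_k],e_j)-g([e_j,e_k],e_i),
\end{equation}
so each of the nine vectors $\nabla_{e_i}e_j$ is read off as an explicit combination of $e_1,e_2,e_3$ with coefficients polynomial in $\alpha,\beta,\gamma,\delta$ (remembering $g(e_3,e_3)=-1$ when converting inner products into components). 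Equivalently one could simply quote the Levi-Civita connection of $G_7$ already recorded in \cite{BO} (and in \cite{Wa}).

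Next I would use the rigid shape of $J$. Because $J=+\mathrm{id}$ on $\mathrm{span}\{e_1,e_2\}$ and $J=-\mathrm{id}$ on $\mathrm{span}\{e_3\}$, for any vector $v=v_1e_1+v_2e_2+v_3e_3$ one has $v-Jv=2v_3e_3$ and $v+Jv=2(v_1e_1+v_2e_2)$. Applying this to $(\nabla_{e_i}J)e_j=\nabla_{e_i}(Je_j)-J\nabla_{e_i}e_j$ shows that $(\nabla_{e_i}J)e_j$ equals twice the $e_3$-component of $\nabla_{e_i}e_j$ when $j\in\{1,2\}$, and equals $-2$ times the $\mathrm{span}\{e_1,e_2\}$-component of $\nabla_{e_i}e_3$ when $j=3$. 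Plugging this into
\begin{equation}
\nabla^0_{e_i}e_j=\nabla_{e_i}e_j-\tfrac12(\nabla_{e_i}J)(Je_j)
\end{equation}
and simplifying produces the asserted table: the $e_3$-parts of $\nabla_{e_i}e_1$ and $\nabla_{e_i}e_2$ are exactly cancelled by the correction term, which explains why $\gamma$ enters the canonical connection only through the single combination $\beta-\tfrac{\gamma}{2}$ in $\nabla^0_{e_3}e_1$ and $\nabla^0_{e_3}e_2$, and why $\delta$ disappears altogether.

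I expect no conceptual obstacle here: the whole argument is the bookkeeping of nine Koszul computations followed by nine one-line substitutions, all elementary. As sanity checks at the end I would verify that $\nabla^0 J=0$ (which holds for any structure of this type, since $J^2=\mathrm{id}$ gives $(\nabla_XJ)J=-J(\nabla_XJ)$, and this forces the correction term to kill $\nabla J$) and that each $\nabla^0_{e_i}$ acts skew-symmetrically on the pseudo-orthonormal frame, i.e. $\nabla^0 g=0$; both are manifest in the displayed table, where each $\nabla^0_{e_i}$ is visibly an infinitesimal rotation in the $e_1e_2$-plane fixing $e_3$. Since this lemma is exactly Lemma 3.20 of \cite{Wa}, one may instead just cite that reference, but the direct derivation above is short enough to include.
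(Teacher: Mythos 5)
Your proposal is correct, and it in fact supplies more than the paper does: here the statement is simply quoted from Lemma 3.20 of \cite{Wa} with no in-text derivation, so your Koszul computation plus the projection argument is exactly the computation that citation hides. Your key structural observation is right and checks out against the table: writing $\nabla_{e_i}e_j=v_1e_1+v_2e_2+v_3e_3$, the eigenspace decomposition of $J$ gives $(\nabla_{e_i}J)e_j=2v_3e_3$ for $j=1,2$ and $(\nabla_{e_i}J)e_3=-2(w_1e_1+w_2e_2)$ for $\nabla_{e_i}e_3=w_1e_1+w_2e_2+w_3e_3$, so $\nabla^0_{e_i}e_j$ is the $\mathrm{span}\{e_1,e_2\}$-projection of $\nabla_{e_i}e_j$ and $\nabla^0_{e_i}e_3=w_3e_3=0$ by metricity of $\nabla$ (e.g. $\nabla_{e_1}e_1=\alpha e_2+\alpha e_3$ and $2g(\nabla_{e_3}e_1,e_2)=-2\beta+\gamma$ reproduce the entries $\alpha e_2$ and $-(\beta-\tfrac{\gamma}{2})e_2$). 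Your closing checks $\nabla^0J=0$ and $\nabla^0g=0$ are also valid consequences of $(\nabla_XJ)J=-J(\nabla_XJ)$ and the $g$-symmetry of $J$, so the argument is complete as written.
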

\vskip 0.5 true cm
\noindent By (2.8) and Lemma 3.8, we have
\begin{align}
&T^0(e_1,e_2)=\beta e_3,~~T^0(e_1,e_3)=-\alpha e_1-\frac{\gamma}{2}e_2-\beta e_3,~~T^0(e_2,e_3)=-(\beta+\frac{\gamma}{2})e_1-\delta e_2 -\delta e_3.
\end{align}
By (2.9) and (3.28), we have
\vskip 0.5 true cm
\begin{lem}
The tensor $A^0$ of the canonical connection $\nabla^0$ of $(G_7,J)$ is given by
\begin{align}
&A^0(e_1,e_2)e_1=\alpha\beta e_1+\frac{\beta\gamma}{2}e_2+\beta^2e_3,~~
A^0(e_1,e_2)e_2=\beta(\beta+\frac{\gamma}{2})e_1+\beta\delta e_2+\beta\delta e_3,\\\notag
&A^0(e_1,e_2)e_3=0,~~
A^0(e_1,e_3)e_1=-\alpha\beta e_1-\frac{\beta\gamma}{2}e_2+(\frac{\beta\gamma}{2}-\beta^2)e_3,\\\notag
&A^0(e_1,e_3)e_2=-\beta(\beta+\frac{\gamma}{2})e_1-\beta\delta e_2-(\alpha\beta+\beta\delta)e_3,~~\\\notag
&A^0(e_1,e_3)e_3=(\alpha^2+\frac{\beta\gamma}{2}+\frac{\gamma^2}{4})e_1+(\frac{\alpha\gamma}{2}+\frac{\gamma\delta}{2})e_2
+(\alpha\beta+\frac{\gamma\delta}{2})e_3,\\\notag
&A^0(e_2,e_3)e_1=-\alpha\delta e_1-\frac{\gamma\delta}{2}e_2,\\\notag
&A^0(e_2,e_3)e_2=-\delta(\beta+\frac{\gamma}{2})e_1-\delta^2 e_2-(\delta^2+\beta^2+\frac{\beta\gamma}{2})e_3,~~\\\notag
&A^0(e_2,e_3)e_3=(\alpha+\delta)(\beta+\frac{\gamma}{2})e_1+(\frac{\beta\gamma}{2}+\frac{\gamma^2}{4}+\delta^2)e_2
+(\delta^2+\beta^2+\frac{\beta\gamma}{2})e_3
.\notag
\end{align}
\end{lem}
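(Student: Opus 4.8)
The plan is to compute $A^0$ directly from its definition in (2.9), namely $A^0(X,Y)Z=T^0(T^0(X,Y),Z)$, using the values of $T^0$ on the pseudo-orthonormal basis recorded in (3.28) together with the $\mathbb{R}$-bilinearity and skew-symmetry of $T^0$. In particular $T^0(e_i,e_i)=0$ and $T^0(e_j,e_i)=-T^0(e_i,e_j)$, so the three entries $T^0(e_1,e_2)=\beta e_3$, $T^0(e_1,e_3)=-\alpha e_1-\frac{\gamma}{2}e_2-\beta e_3$, $T^0(e_2,e_3)=-(\beta+\frac{\gamma}{2})e_1-\delta e_2-\delta e_3$ determine $T^0$ completely as a bilinear map.

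First I would fix a pair $(e_i,e_j)$ with $i<j$, read off the vector $v_{ij}:=T^0(e_i,e_j)$ from (3.28), and then, for each $k\in\{1,2,3\}$, expand $T^0(v_{ij},e_k)$ by bilinearity, substituting the table entries and invoking skew-symmetry whenever the first slot carries an index exceeding $k$. For example, $A^0(e_1,e_2)e_1=T^0(\beta e_3,e_1)=-\beta\,T^0(e_1,e_3)=\alpha\beta e_1+\frac{\beta\gamma}{2}e_2+\beta^2 e_3$, while $A^0(e_1,e_3)e_3=T^0\!\left(-\alpha e_1-\frac{\gamma}{2}e_2-\beta e_3,\,e_3\right)=-\alpha\,T^0(e_1,e_3)-\frac{\gamma}{2}\,T^0(e_2,e_3)$, which upon collecting terms equals $\left(\alpha^2+\frac{\beta\gamma}{2}+\frac{\gamma^2}{4}\right)e_1+\left(\frac{\alpha\gamma}{2}+\frac{\gamma\delta}{2}\right)e_2+\left(\alpha\beta+\frac{\gamma\delta}{2}\right)e_3$. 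The remaining seven entries in the statement are obtained in exactly the same fashion.

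The argument is routine linear algebra with no conceptual obstacle; the only point that requires attention is the bookkeeping of the cross terms, especially the compound coefficient $\beta+\frac{\gamma}{2}$ occurring in $T^0(e_2,e_3)$ and the sign conventions tied to the timelike vector $e_3$. Thus the mildly delicate coefficients, such as $\frac{\beta\gamma}{2}-\beta^2$ in $A^0(e_1,e_3)e_1$, the $e_3$-component $-(\alpha\beta+\beta\delta)$ of $A^0(e_1,e_3)e_2$, and $\delta^2+\beta^2+\frac{\beta\gamma}{2}$ in $A^0(e_2,e_3)e_2$, should each be re-expanded as an independent check. Carrying this out for all nine triples $(e_i,e_j,e_k)$ with $i<j$ produces precisely the formulas listed in the lemma.
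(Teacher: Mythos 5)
Your proposal is correct and is exactly the paper's (implicit) argument: the lemma follows by expanding $A^0(e_i,e_j)e_k=T^0(T^0(e_i,e_j),e_k)$ bilinearly using the torsion values for $G_7$ and skew-symmetry, and your sample computations reproduce the stated coefficients. (Only minor quibble: the metric signature and the timelike character of $e_3$ play no role in computing $A^0$ itself — they enter only later, for $a^0$ and $\overline{A}^0$.)
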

\vskip 0.5 true cm
By Lemma 3.9, (2.11) and (2.12), we have
\begin{align}
{\overline{A}}^0\left(\begin{array}{c}
e_1\\
e_2\\
e_3
\end{array}\right)=\left(\begin{array}{ccc}
\beta^2-\beta\gamma&\alpha\beta&\alpha\beta+\frac{\gamma\delta}{2}\\
\alpha\beta&\delta^2+2\beta^2+\beta\gamma&\beta^2+\frac{\beta\gamma}{2}+\delta^2\\
-(\alpha\beta+\frac{\gamma\delta}{2})&-(\beta^2+\frac{\beta\gamma}{2}+\delta^2)&-(\alpha^2+\beta\gamma+\frac{\gamma^2}{2}+\delta^2)
\end{array}\right)\left(\begin{array}{c}
e_1\\
e_2\\
e_3
\end{array}\right).
\end{align}
\noindent By (3.31) in \cite{Wa}, we have
\begin{align}
{\rm Ric}^0\left(\begin{array}{c}
e_1\\
e_2\\
e_3
\end{array}\right)=\left(\begin{array}{ccc}
-(\alpha^2+\frac{\beta\gamma}{2})&0&0\\
0&-(\alpha^2+\frac{\beta\gamma}{2})&0\\
-(\gamma\alpha+\frac{\delta\gamma}{2})&\alpha^2+\frac{\beta\gamma}{2}&0
\end{array}\right)\left(\begin{array}{c}
e_1\\
e_2\\
e_3
\end{array}\right).
\end{align}
\noindent By (2.14), we have
\begin{align}
{{\rm Wan}}^0\left(\begin{array}{c}
e_1\\
e_2\\
e_3
\end{array}\right)=\left(\begin{array}{ccc}
-(\alpha^2-\frac{\beta\gamma}{2}+\beta^2)&-\alpha\beta&-(\alpha\beta+\frac{\gamma\delta}{2})\\
-\alpha\beta&-(\alpha^2+2\beta^2+\delta^2+\frac{3}{2}\beta\gamma)&-(\beta^2+\frac{\beta\gamma}{2}+\delta^2)\\
\alpha\beta-\gamma\alpha&\alpha^2+\beta^2+\delta^2+\beta\gamma&\alpha^2+\beta\gamma+\frac{\gamma^2}{2}+\delta^2
\end{array}\right)\left(\begin{array}{c}
e_1\\
e_2\\
e_3
\end{array}\right).
\end{align}
\noindent If $(G_7,g,J)$ is the first kind algebraic Wanas soliton associated to the connection $\nabla^0$, then
${\rm Wan}^0=c{\rm Id}+D$, so
\begin{align}
\left\{\begin{array}{l}
De_1=-(\alpha^2-\frac{\beta\gamma}{2}+\beta^2+c)e_1-\alpha\beta e_2-(\alpha\beta+\frac{\gamma\delta}{2})e_3,\\
De_2=-\alpha\beta e_1-(\alpha^2+2\beta^2+\delta^2+\frac{3}{2}\beta\gamma+c)e_2-(\beta^2+\frac{\beta\gamma}{2}+\delta^2)e_3,\\
De_3=(\alpha\beta-\gamma\alpha)e_1+(\alpha^2+\beta^2+\delta^2+\beta\gamma)e_2+(\alpha^2+\beta\gamma+\frac{\gamma^2}{2}+\delta^2-c)e_3.\\
\end{array}\right.
\end{align}
\noindent Then we have
\vskip 0.5 true cm
\begin{thm}
$(G_7,g,J)$ is the first kind algebraic Wanas soliton associated to the connection $\nabla^0$ if and only if $\alpha=\gamma=0$, $\beta^2+c=0$, $\delta\neq 0$.
In particular,
\begin{align}
{D}\left(\begin{array}{c}
e_1\\
e_2\\
e_3
\end{array}\right)=\left(\begin{array}{ccc}
0&0&0\\
0&-(\beta^2+\delta^2)&-(\beta^2+\delta^2)\\
0&\beta^2+\delta^2&\beta^2+\delta^2
\end{array}\right)\left(\begin{array}{c}
e_1\\
e_2\\
e_3 \notag \\
\end{array}\right).
\end{align}
\end{thm}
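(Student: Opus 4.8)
The plan is to follow exactly the scheme already used for $G_1$ through $G_6$. The equation ${\rm Wan}^0=c\,{\rm Id}+D$ completely determines $De_1,De_2,De_3$ as the linear combinations of $e_1,e_2,e_3$ read off from the matrix of ${\rm Wan}^0$ on $G_7$ computed above, so the whole content of Definition 2.1 is that this $D$ satisfy the derivation identity (2.16) for the Lie algebra $\mathfrak{g}_7$. I would therefore substitute those three formulas into $D[e_i,e_j]=[De_i,e_j]+[e_i,De_j]$ for each of the brackets $[e_1,e_2],[e_1,e_3],[e_2,e_3]$ of $\mathfrak{g}_7$, expand both sides in the basis $\{e_1,e_2,e_3\}$, and equate coefficients, producing a polynomial system in $\alpha,\beta,\gamma,\delta,c$. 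The theorem asserts that, under the standing hypotheses $\alpha+\delta\neq 0$ and $\alpha\gamma=0$ attached to the presentation of $\mathfrak{g}_7$, this system is solvable exactly when $\alpha=\gamma=0$ and $c=-\beta^2$, and then $\delta\neq 0$ is forced by $\alpha+\delta\neq 0$.

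The main work is to solve the system, and the organizing idea is the dichotomy forced by $\alpha\gamma=0$. In the branch $\gamma=0$ I expect the $e_1$-components of the identities for $[e_1,e_2]$ and $[e_1,e_3]$ to read $\alpha(\alpha^2+\beta^2+c)=0$ and $\alpha(\beta^2+c)=0$ respectively; subtracting them gives $\alpha^3=0$, hence $\alpha=0$. In the branch $\alpha=0$ I expect the $e_1$-component of the identity for $[e_1,e_2]$ to read $\gamma^2\delta=0$, and since $\alpha+\delta\neq 0$ with $\alpha=0$ forces $\delta\neq 0$, this gives $\gamma=0$. Either way one is left with $\alpha=\gamma=0$ and $\delta\neq 0$, and the surviving equations simplify drastically, all three brackets now being scalar multiples of $e_2+e_3$; the identity for $[e_2,e_3]$ collapses to $\delta(\beta^2+c)=0$, so $c=-\beta^2$, while $\delta$ and $\beta$ are otherwise unconstrained. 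I expect the bookkeeping of this elimination — in particular isolating the single cubic that kills $\alpha$, and checking that no further constraints survive — to be the only real obstacle, similar in spirit to the analysis carried out for $G_6$ in Theorem 3.6.

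For the converse it suffices to confirm that the $D$ displayed in the statement is genuinely a derivation of $\mathfrak{g}_7$. Setting $\alpha=\gamma=0$ and $c=-\beta^2$ in the formulas for $De_i$ gives $De_1=0$ and $De_2=-De_3=-(\beta^2+\delta^2)(e_2+e_3)$, so with $w:=e_2+e_3$ we have $Dw=De_2+De_3=0$. When $\alpha=\gamma=0$ every bracket of $\mathfrak{g}_7$ is a scalar multiple of $w$, namely $[e_1,e_2]=-\beta w$, $[e_1,e_3]=\beta w$, $[e_2,e_3]=\delta w$, so the derived subalgebra is the line spanned by $w$ and $D[e_i,e_j]=0$ for all $i,j$. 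On the other side, $[De_1,e_j]=0$ since $De_1=0$; $[e_1,De_2]$ and $[e_1,De_3]$ are multiples of $[e_1,w]=[e_1,e_2]+[e_1,e_3]=0$; and $[De_2,e_3]+[e_2,De_3]=(\beta^2+\delta^2)\bigl(-[w,e_3]+[e_2,w]\bigr)=(\beta^2+\delta^2)\bigl(-[e_2,e_3]+[e_2,e_3]\bigr)=0$. Hence $[De_i,e_j]+[e_i,De_j]=0=D[e_i,e_j]$ for each of the three brackets, so (2.16) holds and the proof is complete.
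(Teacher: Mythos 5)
Your proposal is correct and takes essentially the same route as the paper: substitute the $De_i$ read off from ${\rm Wan}^0=c{\rm Id}+D$ (i.e.\ (3.33)) into the derivation identity (2.16), obtain the polynomial system (the paper's (3.34)), and split according to $\alpha\gamma=0$ with $\alpha+\delta\neq 0$, arriving at $\alpha=\gamma=0$, $\delta\neq 0$, $c=-\beta^2$. The only differences are cosmetic: in the $\gamma=0$ branch you kill $\alpha$ directly by combining the two $e_1$-component equations (giving $\alpha^3=0$), whereas the paper's Case 2 first forces $\delta=0$ via its eighth and ninth equations and then reaches a contradiction, and you add an explicit verification that the displayed $D$ is a derivation (via $Dw=0$ for $w=e_2+e_3$), a sufficiency check the paper leaves implicit.
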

\begin{proof}
By (2.16) and (3.33), we get
\begin{align}
\left\{\begin{array}{l}
\alpha(\alpha^2+\beta^2+\beta\gamma+c)+\frac{\gamma^2\delta}{2}=0,\\
\beta(\alpha^2+\beta^2+c+\delta\alpha)+\frac{\gamma\delta^2}{2}=0,\\
\beta(3\alpha^2+\beta^2+\beta\gamma+\frac{\gamma^2}{2}+c)+(\delta-\alpha)(\alpha\beta+\frac{\gamma\delta}{2})=0,\\
\alpha(-\beta^2+\frac{\gamma^2}{2}-c)=0,\\
\beta(\beta^2-\beta\gamma-\frac{\gamma^2}{2}+c+\alpha\delta)=0,\\
\beta(\alpha^2+\beta^2+\alpha\delta+c)-\frac{\alpha\gamma\delta}{2}=0£¬\\
\gamma(-\beta^2+\frac{\gamma^2}{2}-\beta\gamma-c+\alpha\delta)=0,\\
\delta(\beta^2-\frac{\gamma^2}{2}+c)=0,\\
\delta(\alpha^2+\beta^2-\frac{\gamma^2}{2}+\beta\gamma+c)=0.\\
\end{array}\right.
\end{align}
\noindent Case 1) $\alpha=0$. By the first equation in (3.34) and $\alpha+\delta\neq 0$, we get $\delta\neq 0$ and $\gamma=0$. By (3.34), we get
$\beta^2+c=0$.\\
\noindent Case 2) $\alpha\neq 0$. By $\alpha\gamma=0$, then $\gamma=0$. By the eighth equation and the ninth equation in (3.34), we get $\delta=0$.
By the first equation in (3.34), we have $\alpha^2+\beta^2+c=0.$ By the third equation in (3.34), we have $\beta(2\alpha^2+\beta^2+c)=0$. So $\beta=0$
and $\alpha^2+c=0.$ By the fourth equation in (3.34), we have $\beta^2+c=0$, then $c=0$ and $\alpha=0$. This is a contradiction.
\end{proof}
By (2.12), (2.13) and (3.32), we get
$
{\widetilde{{\rm Wan}}}^0\left(\begin{array}{c}
e_1\\
e_2\\
e_3
\end{array}\right)=$
\begin{align}\left(\begin{array}{ccc}
-(\alpha^2-\frac{\beta\gamma}{2}+\beta^2)&-\alpha\beta&-(\alpha\beta+\frac{\gamma\delta}{4}-\frac{\gamma\alpha}{2})\\
-\alpha\beta&-(\alpha^2+2\beta^2+\delta^2+\frac{3}{2}\beta\gamma)&-(\frac{\alpha^2}{2}+\beta^2+\frac{3\beta\gamma}{4}+\delta^2)\\
\alpha\beta+\frac{\gamma\delta}{4}-\frac{\gamma\alpha}{2}&\frac{\alpha^2}{2}+\beta^2+\frac{3\beta\gamma}{4}+\delta^2
&\alpha^2+\beta\gamma+\frac{\gamma^2}{2}+\delta^2
\end{array}\right)\left(\begin{array}{c}
e_1\\
e_2\\
e_3
\end{array}\right).
\end{align}
\noindent If $(G_7,g,J)$ is the second kind algebraic Wanas soliton associated to the connection $\nabla^0$, then
$\widetilde{{\rm Wan}}^0=c{\rm Id}+D$, so
\begin{align}
\left\{\begin{array}{l}
De_1=-(\alpha^2-\frac{\beta\gamma}{2}+\beta^2+c)e_1-\alpha\beta e_2-(\alpha\beta+\frac{\gamma\delta}{4}-\frac{\gamma\alpha}{2})e_3,\\
De_2=-\alpha\beta e_1-(\alpha^2+2\beta^2+\delta^2+\frac{3}{2}\beta\gamma+c)e_2-(\frac{\alpha^2}{2}+\beta^2+\frac{3\beta\gamma}{4}+\delta^2)e_3,\\
De_3=(\alpha\beta+\frac{\gamma\delta}{4}-\frac{\gamma\alpha}{2})e_1+(\frac{\alpha^2}{2}+\beta^2+\frac{3\beta\gamma}{4}+\delta^2)e_2+
(\alpha^2+\beta\gamma+\frac{\gamma^2}{2}+\delta^2-c)e_3.\\
\end{array}\right.
\end{align}
\noindent By (2.16) and (3.36), we get
\begin{align}
\left\{\begin{array}{l}
\alpha(\frac{\alpha^2}{2}+\beta^2+\frac{7}{4}\beta\gamma+c)+(\beta+\gamma)(\frac{\gamma\delta}{4}-\frac{\gamma\alpha}{2})=0,\\
\beta(\beta^2-\frac{\beta\gamma}{2}+c)+\delta(\alpha\beta+\frac{\gamma\delta}{4}-\frac{\gamma\alpha}{2})=0,\\
\beta(2\alpha^2+\beta^2+\frac{\beta\gamma}{2}+\frac{\gamma^2}{2}+c)+(\delta-\alpha)(\alpha\beta+\frac{\gamma\delta}{4}-\frac{\gamma\alpha}{2})=0,\\
\alpha(\frac{\alpha^2}{2}-\beta^2+\frac{\gamma^2}{2}-\frac{\beta\gamma}{4}-c)-\frac{\beta\gamma\delta}{4}=0£¬\\
\beta(\alpha^2-\beta^2+\frac{\gamma^2}{2}+\frac{3}{2}\beta\gamma-\alpha\delta-c)=0,\\
\beta(\frac{\beta\gamma}{2}+\beta^2+\alpha\delta+c)+\alpha(\frac{\gamma\alpha}{2}-\frac{\gamma\delta}{4})=0,\\
\gamma(\frac{\alpha^2}{2}-\beta^2+\frac{\gamma^2}{2}-\frac{\delta^2}{4}+\frac{3}{4}\alpha\delta-\beta\gamma-c)=0,\\
\delta(\frac{\alpha^2}{2}-\beta^2+\frac{\gamma^2}{2}+\frac{\beta\gamma}{2}-c)+\frac{\alpha\beta\gamma}{2}=0,\\
\alpha(-\frac{\alpha^2}{2}-\beta^2-\frac{3}{4}\beta\gamma-c)+\alpha\beta\gamma+(\gamma+\beta)(\frac{\gamma\delta}{4}-\frac{\gamma\alpha}{2})=0.\\
\end{array}\right.
\end{align}
\noindent Solving (3.37) and similar to Theorem 3.10, Then
\vskip 0.5 true cm
\begin{thm}
$(G_7,g,J)$ is the second kind algebraic Wanas soliton associated to the connection $\nabla^0$ if and only if $\alpha=\gamma=0$, $\beta^2+c=0$, $\delta\neq 0$.
In particular,
\begin{align}
{D}\left(\begin{array}{c}
e_1\\
e_2\\
e_3
\end{array}\right)=\left(\begin{array}{ccc}
0&0&0\\
0&-(\beta^2+\delta^2)&-(\beta^2+\delta^2)\\
0&\beta^2+\delta^2&\beta^2+\delta^2
\end{array}\right)\left(\begin{array}{c}
e_1\\
e_2\\
e_3 \notag \\
\end{array}\right).
\end{align}
\end{thm}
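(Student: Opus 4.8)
The plan is to carry out the same computation scheme used for Theorem 3.10, now with the symmetrized operator $\widetilde{{\rm Wan}}^0$ in place of ${\rm Wan}^0$. By the definition in (2.15), $(G_7,g,J)$ is a second kind algebraic Wanas soliton exactly when the endomorphism $\widetilde{{\rm Wan}}^0-c\,{\rm Id}$ of $\mathfrak{g}_7$ computed in (3.35) is a derivation of $\mathfrak{g}_7$; reading the rows of that matrix against $\widetilde{{\rm Wan}}^0=c\,{\rm Id}+D$ gives the expressions for $De_1,De_2,De_3$ recorded in (3.36). Substituting these into the derivation identity (2.16) for the three structure brackets in (3.27) and collecting the coefficients of $e_1,e_2,e_3$ produces the nine scalar equations (3.37).

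Then I would solve (3.37) by splitting on the structural constraint $\alpha\gamma=0$. If $\alpha\neq 0$, then $\gamma=0$, and with $\gamma=0$ the first and fourth equations of (3.37) reduce to $\tfrac{\alpha^2}{2}+\beta^2+c=0$ and $\tfrac{\alpha^2}{2}-\beta^2-c=0$; adding them forces $\alpha^2=0$, contradicting $\alpha\neq 0$, so this case is empty. If $\alpha=0$, then $\alpha+\delta\neq 0$ gives $\delta\neq 0$; the fourth equation of (3.37) then reads $\tfrac14\beta\gamma\delta=0$, so $\beta\gamma=0$, and if $\beta=0$ the first equation reads $\tfrac14\gamma^2\delta=0$, so in every subcase $\gamma=0$. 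Feeding $\alpha=\gamma=0$ back into (3.37) leaves only the eighth equation, which becomes $\delta(\beta^2+c)=0$, i.e. $\beta^2+c=0$, and one checks that the remaining equations are then satisfied identically. Conversely, for $\alpha=\gamma=0$ and $c=-\beta^2$ the matrix in (3.36) is $-\beta^2{\rm Id}+D$ with $D$ the matrix displayed in the statement; since $\alpha=\gamma=0$ makes every bracket in (3.27) a scalar multiple of $e_2+e_3$, and since $De_1=0$, $De_2=-(\beta^2+\delta^2)(e_2+e_3)$, $De_3=(\beta^2+\delta^2)(e_2+e_3)$ all lie in $\mathbb{R}(e_2+e_3)=\ker D$, both sides of $D[e_i,e_j]=[De_i,e_j]+[e_i,De_j]$ vanish, so $D$ is a derivation.

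The step I expect to be the main obstacle is the reduction of the nine-equation polynomial system (3.37): once $\alpha\gamma=0$ is imposed many of the equations become dependent, and some care is needed to pick out the genuinely decisive relations (here the first, fourth and eighth) and to see why, despite the coefficient differences between (3.37) and the first-kind system (3.34), the final classification coincides with that of Theorem 3.10. Once the solution set has been identified, verifying that the exhibited $D$ is an honest derivation is immediate from the degeneration of $\mathfrak{g}_7$, when $\alpha=\gamma=0$, to a Lie algebra all of whose brackets lie on the single line $\mathbb{R}(e_2+e_3)$, which $D$ annihilates.
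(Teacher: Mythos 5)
Your proposal is correct and follows essentially the same route as the paper, whose proof consists precisely of solving the system (3.37) by the same case split on $\alpha\gamma=0$ used in Theorem 3.10: $\alpha\neq 0$ forces $\gamma=0$ and then a contradiction, while $\alpha=0$ forces $\delta\neq 0$, $\gamma=0$ and $\beta^2+c=0$, with the displayed $D$ then a derivation. One minor point in your sufficiency check: for the bracket $[e_2,e_3]$ the two terms $[De_2,e_3]$ and $[e_2,De_3]$ do not vanish individually (since $e_2+e_3$ is not central when $\delta\neq 0$) but cancel each other, so the verification still goes through.
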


\vskip 0.5 true cm

\section{Acknowledgements}

The author was supported in part by NSFC No.11771070.

\vskip 0.5 true cm


\bigskip
\bigskip

\noindent {\footnotesize {\it Y. Wang} \\
{School of Mathematics and Statistics, Northeast Normal University, Changchun 130024, China}\\
{Email: wangy581@nenu.edu.cn}

\end{document}